\title{Equi-dimensionalization via subdivision of simplices}
\author{Wataru Kai}
\address{Mathematical Institute, Tohoku University, Aramaki Aoba 6-3, 980-8578 Sendai, Japan}
\email{kaiw@tohoku.ac.jp}
\date{\today}
\subjclass[2020]{14C25 (Primary), 14C15, 14F42, 19E15 (Secondary)}
\begin{document}

\begin{abstract}
We give an alternative proof of Suslin's equi-dimensionalization moving lemma using a different geometric construction.
The new construction provides better control of the degrees of the polynomials describing the geometric procedure.

The new degree bound can be used to improve an earlier result of Hiroyasu Miyazaki and the present author 
on algebraic cycles {\it with modulus}:
the isomorphism in question is now valid for any fixed divisor, 
without the need to take the limit over thickenings.
\end{abstract}

\maketitle

\tableofcontents

\section{Introduction}\label{sec:introduction}

Let $X$ be an affine scheme of finite type over a field $k$.
Suslin \cite{Suslin2000} showed how to render a given closed set $V\subset X\times \bbA ^n$ 
by pullback along an appropriate $X$-morphism $X\times \bbA ^n \to X\times \bbA ^n$ (compatibly with a prescribed map $X\times Z\to X\times Z$ for a hypersurface $Z\subset \bbA ^n$ if any such data is given)
and make it equi-dimensional over $\bbA ^n\setminus Z$.

By orchestrating this construction to form an endomorphism of the cosimplicial scheme $X\times \Delta ^\bullet $ (where $\Delta ^n$ is the algebraic $n$-simplex \eqref{eq:alg_simplex}),
he showed that Bloch's higher Chow group is isomorphic to its equi-dimensional variant:
\begin{theorem}[{Suslin \cite[Th.~2.1]{Suslin2000}}]
\label{thm:Suslin-quis}
	Let $X$ be an affine scheme of finite type over a field $k$
	and $t\ge 0$ be a non-negative integer.
	Then the inclusion 
	\[
		z^\equi _t (X,\bullet ) \to z_t(X,\bullet )
	\]
	is a quasi-isomorphism.
	(See \S \ref{sec:cycle-complexes} for the definitions of these complexes.)
\end{theorem}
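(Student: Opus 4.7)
The plan is to accept the equi-dimensionalization moving lemma (re-proved in this paper via subdivision of simplices, with sharper degree control) and then follow the general blueprint of Suslin's original argument in \cite{Suslin2000}. Concretely, I will use the moving lemma to construct a cosimplicial $X$-endomorphism $\Psi^\bullet$ of $X\times \Delta^\bullet$ whose pullback on cycles defines a chain-level retraction onto the equi-dimensional subcomplex, and then produce a chain homotopy between this retraction and the identity.

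First, I would apply the moving lemma to the following data: the closed set $V\subset X\times \bbA ^n$ given by the support of a cycle $\xi\in z_t(X,n)$, and the hypersurface $Z\subset \bbA ^n$ formed by the union of the face hyperplanes of $\Delta ^n$. This produces an $X$-endomorphism $\phi _V$ of $X\times \bbA ^n$ whose pullback of $V$ is equi-dimensional over $\bbA ^n\setminus Z$ and which agrees with a prescribed map on $X\times Z$. The pullback operation $\phi _V^*$ therefore sends $\xi$ to a cycle that already lies in $z^\equi _t(X,n)$.

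Second, I would organize these pointwise operators into a coherent cosimplicial endomorphism $\Psi ^\bullet :X\times \Delta ^\bullet \to X\times \Delta ^\bullet $. The construction proceeds by induction on $n$: having built $\Psi ^{n-1}$ on $X\times \partial \Delta ^n$, one invokes the moving lemma once more to extend it to $X\times \Delta ^n$ so as to simultaneously equi-dimensionalize the finitely many cycles one needs to treat. Passing to pullback, $(\Psi ^\bullet )^*$ defines a chain map $z_t(X,\bullet )\to z^\equi _t(X,\bullet )$ splitting the inclusion.

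Finally, I would exhibit a chain homotopy between the composite $i\circ (\Psi ^\bullet )^*$ and the identity of $z_t(X,\bullet )$; the reverse composite is handled identically since $z^\equi _t$ is a subcomplex of $z_t$. The standard device is to produce an $X$-morphism $X\times \Delta ^\bullet \times \bbA ^1\to X\times \Delta ^\bullet $ interpolating between $\Psi $ and the identity, so that pullback along it and integration along $\bbA ^1$ yields the desired homotopy; this interpolating morphism is again supplied by the moving lemma with $\bbA ^1$ regarded as a parameter.

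The main obstacle, and the essential source of the new geometric content in this paper, is the second step: extending a previously chosen endomorphism defined on the codimension-one faces to the whole of $X\times \Delta ^n$ while still equi-dimensionalizing the prescribed cycles. This compatibility is what makes everything assemble into a cosimplicial, and hence chain-level, operator. The subdivision-of-simplices construction advertised in the title is precisely tailored to carry out this inductive extension, and the improved polynomial degree bounds it delivers are what make the argument run uniformly and enable the sharper applications described in the abstract.
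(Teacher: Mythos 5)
Your proposal is essentially a summary of Suslin's \emph{original} argument: use an equi-dimensionalization moving lemma for closed subsets of $X\times\bbA^n$ relative to a hypersurface $Z$, assemble the resulting endomorphisms into a cosimplicial $X$-endomorphism $\Psi^\bullet$ of $X\times\Delta^\bullet$, and then build a homotopy between $(\Psi^\bullet)^*$ and the identity. That route is sound (it is \cite{Suslin2000}), but it is precisely the argument this paper sets out to \emph{replace}, and your attempt to fit the subdivision construction into its second step rests on a misreading of how subdivision is used.

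The crucial point you have missed is stated explicitly in the introduction: the $(n+1)!$ subdivision maps $\sd^\sigma_n\colon X\times\Delta^n\to X\times\Delta^n$ are \emph{not} part of an endomorphism of the cosimplicial scheme $X\times\Delta^\bullet$. The compatibility relations \eqref{eq:subdivision-compatibility} are not the cosimplicial identities; they mix different permutations and different subdivision maps. So there is no cosimplicial endomorphism $\Psi^\bullet$ to be extracted from this construction, and your step 2, `organize these pointwise operators into a coherent cosimplicial endomorphism by induction on $n$', would fail if you tried to realize it via subdivision. The paper circumvents this with a genuinely different simplicial device: for each fixed $n$ one forms the auxiliary complex $z_t(X,\bullet\inj[n])=\bigoplus_{S\subset[n]} z_t(X,|S|-1)$ together with the canonical map $\sfcan^{[n]}$ to $z_t(X,\bullet)$. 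A general fact about simplicial abelian groups (Lemma~\ref{eq:general-fact-simplicial}) says $\sfcan^{[n]}$ is a homology isomorphism through degree $n-1$, so it suffices to show $\sfcan^{[n]}$ becomes weakly nullhomotopic on the quotient complexes $-/(\equi)$. The signed sums $\sd^{[n]}=\sum_\sigma(-1)^\sigma(\sd^\sigma_s)^*$ \emph{do} define a map out of $z_t(X,\bullet\inj[n])$ (this is where replacing the single $\Delta^\bullet$ by the poset of faces of $[n]$ earns its keep), the homotopy maps $\pr_1\circ\sd^\sigma_{s,k}$ with relations \eqref{Eq:patience} give a chain homotopy $\sfcan^{[n]}\simeq\sd^{[n]}$ on finitely generated subcomplexes, and Corollary~\ref{cor:pullback-equi-dimensional}(i) shows $\sd^{[n]}$ is zero modulo equi-dimensional cycles. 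There is no inductive extension of an endomorphism across faces, and no retraction $z_t\to z_t^\equi$ is ever constructed.

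A secondary issue: you take the moving lemma `as given', but the point of Sections~\ref{sec:subdivision-of-simplices}--\ref{sec:proof-Suslin-quis} is to \emph{re-prove} the quasi-isomorphism from scratch using a new geometric construction, not to reuse Suslin's moving lemma as a black box. The degree bounds advertised in the abstract ($\deg_Y=1$, $\deg_X\le n+1$) are properties of the subdivision maps themselves and have no counterpart in Suslin's construction, which is what ultimately unlocks the modulus application (Theorem~\ref{thm:Suslin-modulus}). If you rebuild the proof on Suslin's endomorphisms as you propose, you would not obtain these bounds and the modulus theorem would again only hold pro-over thickenings.
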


This result was used by
Friedlander--Suslin \cite{FriedlanderSuslin} to prove that Bloch's higher Chow group is isomorphic to a version of motivic cohomology. Voevodsky \cite{VoevodskyMotCohIso} then established that this motivic cohomology is the same as the motivic cohomology they had defined earlier;
see also the beautiful exposition in \cite[Lect.~18A, 19]{MazzaVoevodskyWeibel}.

In these notes, we offer an alternative proof of Theorem \ref{thm:Suslin-quis} based on different constructions.
The motivation for seeking new constructions is twofold.
First, 
the identification of the higher Chow group and the motivic cohomology is such an important result that one could not have enough of different proofs thereof.

Second,
Suslin's constructions do not give nice numerical bounds for the polynomials describing the endomorphisms on $X\times \Delta ^\bullet $.
If we write $X=\Spec A$, an $X$-endomorphism on $X\times \Delta ^n \cong \Spec A[Y_1,\dots ,Y_n]$ corresponds to the choice of $n$ elements $f_1^{(n)},\dots ,f_n^{(n)}$ of $A[Y_1,\dots ,Y_n]$.
If we demand that the endomorphisms form a cosimplicial map,
then Suslin's construction produces $f_k^{(n)}$'s having degrees at least $ n+1$ in the variables $Y_j$.

Also, 
when $X=\bbA ^m = \Spec k[X_1,\dots ,X_m] $ is an affine space (to which case the proof is reduced),
we can consider the degrees of $f_k^{(n)}$ in the variables $X_i$.
These degrees heavily depend on the closed set $V$ and his proof does not imply any uniform bound.

Our constructions are based on the classical idea known as {\it subdivision of simplices}.
This involves $(n+1)!$ endomorphisms of $X\times \Delta ^n$. (E.g.\ a triangle is divided into $6$ small triangles by (barycentric) subdivision.)
They are not part of an endomorphism of the cosimplicial scheme $X\times \Delta ^\bullet $ so do not define an endomorphism of the cycle complex of $X$ as easily as in Suslin's construction. Nonetheless, with an additional simplicial maneuver,
their signed sums do give rise to a map of cycle complexes.
The counterparts to $f_k^{(n)}$'s above will have degree $1$ in the variables $Y_j$ because subdivision is written by affine-linear transformations of $\Delta ^\bullet $.

When $X=\bbA ^m=\Spec k[X_1,\dots ,X_m]$,
it turns out that $f^{(n)}_k$ can be taken to have degrees as low as $n+1$ in the variables $X_i$.
The advance from no uniform bound to this rather explicit one is in part due to an improvement of the proof rather than the construction.
In Suslin's arguments, at the cost of letting $\deg _X(f^{(n)}_k)$ soar, the proof of equi-dimensionality goes in a sense uniformly for all fibers.
\footnote{For the interested reader: in the proof of \cite[Th.~1.1]{Suslin2000}, he considers the fiber of $\Phi \inv (V)\subset \bbA ^m\times \bbA ^n$ over each point $y\in \bbA ^n\setminus Z$ and eventually shows that the {\it infinite part} of the fiber $\Phi\inv (V)_y$ (the closure in $\bbP ^{m}$, intersected with the hyperplane $\bbP ^{m-1}$ at infinity) is contained in a closed subset $W_0$ (independent of $y$!) of dimension $\le t-1$.}
In our treatment, more objects appearing in the discussion 
depend genuinely on the fiber.
 We have to respect their individual personalities and duly respond.
 The dedication is rewarded by the uniform bound of $\deg _X(f^{(n)}_k)$.

We demonstrate that these new bounds of degrees can be useful by showing a {\it modulus} analog of Theorem \ref{thm:Suslin-quis}:
\begin{theorem}[{Theorem \ref{thm:Suslin-modulus}}]
\label{thm:modulus-quis-intro}
	Let $X$ be an affine scheme of finite type over a field $k$
	and $D\subset X$ be an effective Cartier divisor.
	Let $t\ge 0$ be a non-negative integer.
	Then the inclusion 
	\[
		z^\equi _t (X|D,\bullet ) \to z_t(X|D,\bullet )
	\]
	is a quasi-isomorphism.
	(See \S \ref{sec:modulus} for the definitions of these complexes.)
\end{theorem}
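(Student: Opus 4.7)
The plan is to transport the alternative proof of Theorem \ref{thm:Suslin-quis} directly to the modulus setting, exploiting the affine-linearity in the simplex coordinates that the subdivision-based construction provides. I expect the chain map and chain homotopy realizing the quasi-isomorphism for the pair $(X,t)$ to restrict verbatim to the modulus subcomplexes, yielding the statement at once.

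First, I would extract from the forthcoming proof of Theorem \ref{thm:Suslin-quis} an explicit chain map $\Phi \colon z_t(X,\bullet) \to z_t^\equi(X,\bullet)$ and a chain homotopy $h$ realizing $\iota \circ \Phi \simeq \mathrm{id}$, both written as signed sums of pullbacks along explicit $X$-morphisms $\varphi \colon X \times \Delta^n \to X \times \Delta^m$ arising from the $(n+1)!$ subdivision maps and the auxiliary simplicial maneuver. I would then prove the following compatibility lemma: for each such $\varphi$, the pullback $\varphi^{*}$ preserves the modulus condition with respect to $D$. Since $\varphi$ is the identity on the $X$-factor and affine-linear on the simplex factor, it extends canonically to a morphism of the natural compactifications of $X \times \Delta^\bullet$ respecting the face at infinity of the simplex. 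Consequently, the modulus inequality for an admissible cycle $V$, which lives on the normalization of the closure of $V$ in $X \times \overline{\Delta}^{m}$, pulls back termwise to the analogous inequality for $\varphi^{*}V$. Granted this lemma, both $\Phi$ and $h$ restrict to the modulus subcomplexes, and the argument proving Theorem \ref{thm:Suslin-quis} applies without modification.

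The main obstacle is the verification of the compatibility lemma for the auxiliary simplicial step: the $(n+1)!$ subdivision morphisms are manifestly affine-linear in the simplex coordinates, but one must ensure that the additional maneuver promoting their signed sums to a chain map shares this property. This is precisely where the improvement over the earlier result with Miyazaki is localized. In Suslin's original setup the polynomials $f_k^{(n)}$ have $Y_j$-degree growing with $n$, so the induced map does not extend to the natural compactifications unless one first enlarges $D$ to $N \cdot D$ with $N$ depending on $n$; this forces the passage to the limit over thickenings in the earlier work. The $Y_j$-degree-$1$ bound achieved by the present construction removes this need entirely, so the quasi-isomorphism holds for the fixed divisor $D$.
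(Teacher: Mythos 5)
Your proposal is essentially the paper's approach: the key step is precisely the compatibility lemma (the paper's Lemma~\ref{lem:modulus-preserved}), which holds because the maps $\pr_1\circ\sd^\sigma_{n,k}$ have degree~$1$ in the simplex coordinates, and once it is established the proof of Theorem~\ref{thm:Suslin-quis} in \S\ref{sec:proof-Suslin-quis} carries over verbatim. One small caveat on your framing: that proof does not actually produce a global chain map $\Phi\colon z_t(X,\bullet)\to z_t^\equi(X,\bullet)$ with a homotopy $\iota\circ\Phi\simeq\mathrm{id}$ (the choice of centers depends on a chosen finitely generated subcomplex), but rather shows that the quotient $z_t(X,\bullet)/(\equi)$ is acyclic via a weak nullhomotopy of $\sfcan^{[n]}$ on $z_t(X,\bullet\inj[n])/(\equi)$; your compatibility lemma applies to the pullback maps appearing in that argument just the same, so the conclusion stands.
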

Formerly, Hiroyasu Miyazaki and the present author \cite{KaiMiyazaki}
had proven a similar result (copying Suslin's very constructions) but we needed to consider the pro-objects on both sides associated with the infinitesimal thickenings $mD$ ($m\ge 1$).
\footnote{In the earlier work, we treated the cubical version of the higher Chow group with modulus, but the same proof works for the simplicial version as well. The new result, without the need of thickenings, is {\em only available for the simplicial} version because we failed to cook up subdivision of cubes and (importantly) homotopy by affine-linear maps. 
}

\subsection{Recollection of the cycle complexes}\label{sec:cycle-complexes}
Let us briefly recall Bloch's higher Chow complex and its equi-dimensional variant from \cite{Bloch1986} \cite{Suslin2000}. %
Denote by 
\begin{equation}\label{eq:alg_simplex}
	\Delta ^n = \Spec k[t_0,\dots ,t_n]/ (t_0+\dots +t_n -1)
\end{equation} 
the algebraic simplex over the base field $k$.
For each injection $i\colon \{ 0,1,\dots ,n'\} \inj \{ 0,1,\dots ,n\} $,
there is a corresponding closed immersion 
$i\colon \Delta ^{n'} \inj \Delta ^n$.
An irreducible scheme $V$ over $\Delta ^n$ is said to satisfy the {\it face condition} if for each (ordered) inclusion $i$ as above,
the fiber product $V\times _{\Delta ^n} \Delta ^{n'}$ has dimension $\le \dim (V) - (n-n')$ (when this is the case, the fiber product is purely of dimension $\dim (V) - (n-n')$ or is empty).

Let $t$ be an integer. For each $n\ge 0$, define $z_t (X,n)$ to be the free abelian group 
\[
	z_t(X,n):= \bbZ \left[V\subset X\times \Delta ^n 
	\middle\vert
	\begin{minipage}{6.5cm}
		integral closed subscheme of dimension $n+t$ 
		 satisfying the face condition
	\end{minipage}
	 \right] .
\] 
Thanks to the face condition, the alternating sums of pullbacks along various ordered inclusions $\{ 0,\dots ,n-1\} \inj \{ 0,\dots ,n\} $ give well-defined differential maps
\[
	\cdots \to  z_t(X,n) \to z_t(X,n-1) \to\cdots .
\]
This is Bloch's cycle complex $z_t(X,\bullet )$.

When $t\ge 0$, one can ask if $V$ is equi-dimensional over $\Delta ^n$ (i.e., dominant and the fibers have dimensions $\le t$, or equivalently the non-empty fibers have pure dimension $t$).
One thus obtains a subcomplex $z^{\equi }_t (X,\bullet )\subset z_t(X,\bullet )$.

\subsection{Organization of the notes}
In \S \ref{sec:subdivision-of-simplices} we define the {\it subdivision maps}
$\sd ^\sigma _n\colon X\times \Delta ^n \to X\times \Delta ^n$ for each permutation $\sigma \in \mathfrak S_{\{ 0,1,\dots ,n \} }$,
and the {\it homotopy} maps 
$\sd ^\sigma _{n,k}\colon X\times \Delta ^{n+1} \to X\times \Delta ^n\times \Delta ^1$
for each choice of $0\le k\le n$ and $\sigma \in \mathfrak S _{\{ 0,\dots ,k \} }$.
All these maps depend on the preliminary choice of {\it centers} $c^i\colon X\to \Delta ^i $ for $i\le n$. 

Section \ref{sec:equi-dimensionalization} is the technical key of these notes, where we prove that if the choice of centers is generic, then pullback along subdivision renders a given cycle equi-dimensional.

In \S \ref{sec:proof-Suslin-quis} we complete our proof of Theorem \ref{thm:Suslin-quis} using the subdivision construction.

In \S \ref{sec:modulus} we explain that the exact same proof yields the modulus analog (Theorem \ref{thm:modulus-quis-intro}).

\subsection*{Acknowledgment}
This work grew out of conversations with Ryomei Iwasa and Hiroyasu Miyazaki in 2019.
I did not manage to write it up in a timely manner due to the addition of a new member to my household and a global sanitary crisis that followed. 
In the meantime I have been supported by JSPS KAKENHI grants 
(JP18K13382 and JP22K13886)
and the JSPS Overseas Research Fellowship (the class of 2020).
I also thank the University of Milan for hospitality, where I was a long-term visitor in the years 2022--24 funded by the above-mentioned JSPS fellowship.

\section{Subdivision of simplices (fixing the notation)}\label{sec:subdivision-of-simplices}

Here we set up {\it subdivision maps}
\[
	\Delta ^n \to \Delta ^n
\]
which are determined by the choice of the center and a permutation of $\{ 0,1,\dots , n \} $.
We also have to introduce the homotopy maps 
\[
	\Delta ^{n+1} \to \Delta ^n \times \Delta ^1 .
\]
A similar (and more complicated) construction has been used 
in the proof of the localization property of the higher Chow group
\cite{BlochLoc} \cite{LevineLoc}.

\subsection{Simplices}

Endow the set 
$[n]=\{ 0,\dots ,n \} $ with the order $0<\dots <n$.
Recall 
$\Delta ^n %
:= \Spec ( k [t_0,\dots ,t_n]/(t_0+\dots +t_n -1) )$ is the algebraic $n$-simplex.
Denote by 
$v_i\colon \Spec (k)=\Delta ^0 \to \Delta ^n $ the $i\upperth $ vertex defined by the $k$-algebra map 
$t_j\mapsto 0$ for $j\in \{ 0,\dots ,n \} \setminus \{ i\} $
and 
$t_i\mapsto 1$.
\subsection{The centers}

Let $A$ be a $k$-algebra.
Suppose that we are given a section $c^n\in \Delta   ^n (A)$ for each $n\ge 0$.
They will serve as the center of subdivision.
For a non-empty subset $S\subset [n] $
consisting of $s+1$ elements, 
consider the corresponding embedding $i_S\colon \Delta ^{s}  \hookrightarrow \Delta ^n  $.
Denote by 
\[ 
	c^n_S\in \Delta ^n  (A)
\] 	
the image of $c^s\in \Delta ^s  (A)$ under this embedding.
If $S$ is a singleton $\{ i\} $, then $c_{ \{ i\} }$ equals the structure map $\Spec (A)\to \Spec (k)$ followed by the $i\upperth$ vertex $v_i\colon \Spec (k)\to \Delta ^n$.

\subsection{The subdivision maps}\label{Sec:subdivision}
For a finite set $T$, denote by $\mathfrak{S}_T$ the group of permutations on $T$.
For each permutation $\sigma \in \mathfrak{S}_{[n]}= \mathfrak{S}_{\{ 0,\dots ,n \} } $,
we consider the {\it subdivision} map (a morphism of $A$-schemes determined by the choice of centers $\{ c^i\} _{i\le n}$)
\begin{equation*}
\sd ^\sigma _n \colon \Delta _A ^n\xrightarrow{ } \Delta _A ^n,
 \end{equation*}
defined to be the affine-$A$-linear map sending the vertex
$v_k$ ($0\le k\le n$) to $c^n_{\sigma (\{ 0,\dots ,k \} )}$.
Concretely, this is the restriction of the map between the ambient spaces
\[
	\bbA ^{n+1}_A \to \bbA ^{n+1}_A
\]
corresponding to the $A$-linear map $A^{n+1} \to A^{n+1}$ 
sending $(t_0,\dots ,t_n)$ to $\sum _{k=0}^n t_k c^n_{\sigma (\{ 0,\dots ,k\} )} $.
See Figures \ref{Fig:triangle}, \ref{Fig:tetra}.

\begin{figure}
\begin{minipage}{0.4\textwidth}
\begin{tikzpicture}[scale=1.4]
	\coordinate [label=above:0] (0) at (1,1.7);
	\coordinate [label=left:1] (1) at (0,0);
	\coordinate [label=right:2] (2) at (2,0);
	\draw (0) -- (1) -- (2) -- (0);

	\node [fill=red,inner sep=1pt,label=left:$c_{\{ 0,1 \} }$] (01) at ($ (0)!.55!(1) $) {};
\node [fill=red,inner sep=1pt,label=right:$c_{\{ 0,2 \} }$] (02) at ($ (0)!.55!(2) $) {};
\node [fill=red,inner sep=1pt,label=below:$c_{\{ 1,2 \} }$] (12) at ($ (1)!.55!(2) $) {};

	\node [fill=blue,inner sep=1pt] (012) at (1.1,0.5) {};

	\draw (0) -- (012) -- (1) -- (012) -- (2) -- (012) -- (01) -- (012) -- (02) -- (012) -- (12);

\end{tikzpicture}
\caption{\, Subdivision $n=2$}\label{Fig:triangle}
\end{minipage}
\begin{minipage}{0.4\textwidth}

\begin{tikzpicture}[scale=2]
	\coordinate [label=above:0] (0) at (1,1.6);
	\coordinate [label=left:1] (1) at (0,0);
	\coordinate [label=below:2] (2) at (1.4,-0.4);
	\coordinate [label=right:3] (3) at (2,0.1);
	\draw [thick] (0) -- (1) -- (2) -- (3) -- (0); 
	\draw [thick] (0)-- (2) -- (1) -- (3);

	\node [fill=red,inner sep=1pt,label=left:$c_{\{ 0,1 \} }$] (01) at ($ (0)!.45!(1) $) {};
	\node [fill=red,inner sep=1pt] (02) at ($ (0)!.45!(2) $) {};
	\node [fill=red,inner sep=1pt,label=right:$c_{\{ 0,3 \} }$] (03) at ($ (0)!.45!(3) $) {};
	\node [fill=red,inner sep=1pt,label=below:$c_{\{ 1,2 \} }$] (12) at ($ (1)!.45!(2) $) {};
	\node [fill=red,inner sep=1pt] (13) at ($ (1)!.45!(3) $) {};
	\node [fill=red,inner sep=1pt,label=right:$c_{\{ 2,3 \} }$] (23) at ($ (2)!.45!(3) $) {};

\newcommand{\threeCent}[3]{\node [fill=blue,inner sep=1pt] (#1#2#3) at ($0.23*(#1)+0.37*(#2)+0.4*(#3)$) {}}

	\threeCent{0}{1}{2};
	\threeCent{0}{1}{3};
	\threeCent{0}{2}{3};
	\threeCent{1}{2}{3};

\newcommand{\fourCent}[4]{\node [fill=brown,inner sep=1pt] (#1#2#3#4) at ($0.22*(#1)+0.27*(#2)+0.3*(#3)+0.21*(#4)$) {}}

	\fourCent{0}{1}{2}{3};
	
\newcommand{\triangleSub}[4]{
\draw [#4] (#1) -- (#1#2#3); 
\draw [#4] (#2) -- (#1#2#3); 
\draw [#4] (#3) -- (#1#2#3); 
\draw [#4] (#1#2) -- (#1#2#3); 
\draw [#4] (#1#3) -- (#1#2#3); 
\draw [#4] (#2#3) -- (#1#2#3) }
	
	\triangleSub{0}{1}{2}{thick};
	\triangleSub{0}{1}{3}{dotted};
	\triangleSub{0}{2}{3}{thick};
	\triangleSub{1}{2}{3}{dotted};
	
\newcommand{\tetraSub}[5]{\draw [#5] (#1) -- (#1#2#3#4); 
\draw [#5] (#2) -- (#1#2#3#4); 
\draw [#5] (#3) -- (#1#2#3#4); 
\draw [#5] (#4) -- (#1#2#3#4);
\draw [#5] (#1#2) -- (#1#2#3#4); 
\draw [#5] (#1#3) -- (#1#2#3#4); 
\draw [#5] (#1#4) -- (#1#2#3#4);
\draw [#5] (#2#3) -- (#1#2#3#4);  
\draw [#5] (#2#4) -- (#1#2#3#4);
\draw [#5] (#3#4) -- (#1#2#3#4);
\draw [#5] (#1#2#3) -- (#1#2#3#4);
\draw [#5] (#1#2#4) -- (#1#2#3#4);
\draw [#5] (#1#3#4) -- (#1#2#3#4);
\draw [#5] (#2#3#4) -- (#1#2#3#4);}
	
	\tetraSub{0}{1}{2}{3}{};

 \end{tikzpicture}
\caption{\, Subdivision $n=3$. There are $4!=24$ tetrahedra.}\label{Fig:tetra}
\end{minipage}

\end{figure}
The following relations of maps $\Delta   ^{n-1}\rightrightarrows \Delta   ^n
$ hold:
\begin{align}\label{eq:subdivision-compatibility}
\sd _n^\sigma \circ \partial _i
&= \sd _n^{\sigma \circ (i,i+1)} \circ \partial _i 
&(0\le i\le n-1) , \\ 
\sd _n^\sigma \circ \partial _{n}
& = \partial _{\sigma (n)} \circ \sd _{n-1}^{\partial _{\sigma (n)}\inv \circ \sigma } ,
\notag
 \end{align}
where $\partial _{\sigma (n)}\inv \circ \sigma $ is the permutation $\tau \in \mathfrak S _{[n-1]}$ that is characterized by 
$\partial _{\sigma (n)}\circ \tau = \sigma |_{[n-1]}$.
For each $n,\sigma $, if we choose appropriate linear coordinates on the source and target (both of which are non-canonically isomorphic to $\bbA ^n_A$), the map $\sd _{n}^\sigma $ can be written as a morphism
\begin{equation}\label{Eq:written_as}\begin{array}{ccl}
\bbA ^n_A =\Spec (A[Y_1,\dots ,Y_n])&\to &\bbA ^n_A= \Spec (A[Z_1,\dots ,Z_n])\\[5pt]
\vvector{Y_1 \\ \vdots \\ Y_n} &\mapsto &
\vvector{Z_1 \\ \vdots \\ Z_n}:=
\begin{pmatrix}
C^1_1 & \cdots & C^n_1 \\
&\ddots & \vdots  \\
&& C^n_n
\end{pmatrix}
\vvector{Y_1 \\ \vdots \\ Y_n}
\end{array} \end{equation}
where $C^i_j\in A$ (which also depend on the choice of centers) and the blank entries in the lower half are zero.
For example, we may choose $v_0$ as the origin of the source and the vectors $v_i-v_0$ as the coordinate axes ($1\le i\le n$).
On the target, we take $v_{\sigma (0)}$ as the origin and $v_{\sigma (i)}-v_{\sigma (0)}$ as the axes ($1\le i\le n$).

\subsection{The homotopy}\label{Sec:homotopy}

Given $0\le k \le n$ and a bijection $\sigma $ on $\{ 0,\dots ,k \} $,
we consider the subdivision map (Figure \ref{Fig:homotopy})
\begin{equation*}
\sd _{ n,k }^\sigma \colon \Delta _A  ^{n+1}\xrightarrow{ }\Delta _A  ^n\times _{A}\Delta _A  ^1
 \end{equation*}
which is the affine-linear map sending the vertices
\begin{equation*}
v_0,\ \dots \ ,\ v_k
\qquad \text{ to }\qquad (c_{\{ \sigma (0)\} },v_0),
\dots ,
(c_{\sigma (\{ 0,\dots ,k  \} ) },v_0)
 \end{equation*}
and the vertices
\begin{equation*}
v_{k+1},\ \dots \ ,\ v_{n+1}
\qquad \text{ to }\qquad 
(v_{k },v_1)\
\dots \ ,\
(v_{n },v_1).
 \end{equation*}

\begin{figure}

\begin{tikzpicture}[scale=3]
	\coordinate [label=right:0] (0) at (1.6,0.5);
	\coordinate [label=left:1] (1) at (0,0);
	\coordinate [label=right:2\quad $v_0$] (2) at (2.5,-0.3);
	\coordinate [] (0') at ($(0)+(0,1.5)$);
	\coordinate [] (1') at ($(1)+(0,1.5)$);
	\coordinate [label=right:\quad $v_1$] (2') at ($(2)+(0,1.5)$);

\newcommand{\drawtriangle}[4]{
\draw [#4] (#1) -- (#2) -- (#3) -- (#1) }

	\drawtriangle{0}{1}{2}{thick};
	\drawtriangle{0'}{1'}{2'}{thick};

	\draw (0) -- (0');
	\draw [thick] (1) -- (1');
	\draw [thick] (2) -- (2');
	
	\draw [dotted] (0) -- (1');
	\draw [] (0) -- (2');	
	\draw [thick] (1) -- (2');	
	
	\newcommand{\lineCent}[2]{
	\node [fill=red,inner sep=1pt] (#1#2) at ($0.41*(#1)+0.59*(#2)$) {}}
	
	\lineCent{0}{1};
	\lineCent{0}{2};
	\lineCent{1}{2};

	\newcommand{\threeCent}[3]{\node [fill=blue,inner sep=1pt] (#1#2#3) at ($0.33*(#1)+0.39*(#2)+0.38*(#3)$) {}}

	\threeCent{0}{1}{2};
	
	\newcommand{\triangleSub}[4]{
\draw [#4] (#1) -- (#1#2#3); 
\draw [#4] (#2) -- (#1#2#3); 
\draw [#4] (#3) -- (#1#2#3); 
\draw [#4] (#1#2) -- (#1#2#3); 
\draw [#4] (#1#3) -- (#1#2#3); 
\draw [#4] (#2#3) -- (#1#2#3) }

	\triangleSub{0}{1}{2}{dotted};

	\newcommand{\lineToTwo}[2]{
\draw [#2] (#1) -- (2')}

	\lineToTwo{01}{dotted};
	\lineToTwo{12}{thick};
	\lineToTwo{02}{};
	\lineToTwo{012}{};
	
	\draw [dotted] (01) -- (1');
 \end{tikzpicture}
\caption{Homotopy for $n=2$. There are $1!+2!+3!=9$ tetrahedra.}\label{Fig:homotopy}

\end{figure}

Let $\pr _1\colon \Delta ^n\times \Delta ^1 \to \Delta ^n$ be the first projection.
The collection of maps 
\begin{equation*}
\pr _1\circ \sd _{n,k}^\sigma \colon \Delta ^{n+1}_A \rightrightarrows \Delta ^n_A
 \end{equation*}
will serve as a homotopy later on.
It takes some patience to verify the following relations of maps $\Delta   ^{n+1}\rightrightarrows \Delta   ^n\times \Delta   ^1$
(which, by the way, imply the relations \eqref{eq:subdivision-compatibility}):
\begin{align}\label{Eq:patience}
\sd _{n,0}^{\id }\circ \partial _0 &= (\id ,\const _{v_1})
\qquad && 
\begin{minipage}{0.3\textwidth}
	(where $\const _{v_1}$ is the constant  
	map $\Delta ^n  \to \Delta ^1  $ to $v_1$); 
\end{minipage}
\\[1pt]
\sd _{n,n}^\sigma \circ \partial _{n+1}&=
(\sd _n^\sigma ,\const _{v_0})
\qquad && (\sigma \in \mathfrak{S}_{[n]}); \notag \\[5pt]
\sd _{n,k}^\sigma \circ \partial _i&=\sd _{n,k}^{\sigma \circ (i,i+1) }\circ \partial _i
\qquad &&(0\le i<k,\ \sigma \in \mathfrak{S}_{[k]}); 
\notag \\[5pt]
\sd _{n,k}^\sigma \circ \partial _i
&= (\partial _{i-1}\times \id _{\Delta   ^1})\circ \sd _{n-1,k}^\sigma 
\qquad &&(k+1< i\le n, \sigma \in \mathfrak{S}_{[k]});
\notag \\[5pt]
\sd _{n,k}^\sigma \circ \partial _{k+1}&= \sd _{n,k+1}^\sigma  %
\circ \partial _{k+1}
\qquad &&
(0\le k< n, \sigma \in \mathfrak{S}_{[k]});
\notag \\[5pt]
\sd _{n,k}^\sigma \circ \partial _k
&= (\partial _{\sigma (k)}\times \id _{\Delta   ^1})\circ 
\sd _{n-1,k-1}^{\partial _{\sigma (k)}\inv \circ \sigma }  %
\qquad &&(1\le k\le n, \sigma \in \mathfrak{S}_{[k]} ),
\notag
\end{align}
where 
$\partial _{\sigma (k)}\inv \circ \sigma  %
\in \mathfrak{S}_{[k-1]}$ 
is the unique element $\tau $ satisfying
$\partial _{\sigma (k)}( \tau (j)) = \sigma (j )$
for all $0\le j\le k-1$.
In the second to last formula, we wrote $\sigma $ also for the permutation on $[k+1]$ acting on the subset $[k]$ by $\sigma $ and fixing $k+1$.

For each $n,k,\sigma $, under appropriate choice of linear coordinates (uniform in the choices of the centers), the morphism $\pr _1\circ \sd _{n,k}^\sigma $ can be written as
\begin{multline}\label{Eq:homotopy_written_as}
\bbA ^{n+1}_A =\Spec (A[Y_1,\dots ,Y_{n+1}])
\quad\to\quad  \bbA ^n_A=\Spec (A[Z_1,\dots ,Z_n]) \\[5pt]
\vvector{Y_1 \\[5pt] \vdots \\[5pt] \vdots \\[5pt] \vdots \\[5pt] Y_{n+1} }
\quad\mapsto\quad
\vvector{Z_1 \\{} \\ \vdots \\ \vdots \\{} \\ Z_{n}}:=
\left( \begin{array}{ccc|c|ccc}
C^1_1 &\cdots & C^k_1 & &&& \\
&\ddots &\vdots &1
&&& \\
&&C^k_k &&&& \\ \hline
&&&&1 && \\
 &&&&&\ddots & \\
 &&&&&& 1
\end{array}\right)
\vvector{Y_1 \\[5pt] \vdots \\[5pt] \vdots \\[5pt] \vdots \\[5pt] Y_{n+1} }
 \end{multline}
%
%
where $C^i_j\in A$ also depend on the centers, the blank entries are zero and the ``$1$'' in the $(k+1)\upperst$ column is placed in the $\sigma\inv (k)\upperth$ row.
An example of such a choice is to take $v_0$ as the origin of the source $\Delta ^{n+1}_A$ and the vectors $v_i-v_0\in \Delta ^{n+1}(A)$ as its coordinate axes ($1\le i\le n+1$); 
on the target $\Delta ^n_A$, take $v_{\sigma (0)}$ as the origin and
the vectors $v_{\sigma (i)}-v_{\sigma (0)}$
($1\le i\le k$), $v_i -v_{\sigma (0)}$ ($k+1\le i\le n $) as the axes.

\section{Equi-dimensionalization}\label{sec:equi-dimensionalization}

We work over an {\em infinite} field $k$.
We set the base ring $A$ to be the polynomial ring $k[X_1,\dots ,X_m]$.
Products ``$\times $'' of schemes will mean the fiber products over $k$.
The main purpose of this section is to show that given a closed subset $V\subset \Spec k[X_1,\dots ,X_m] \times \Delta ^n $
of dimension $\le n+t$, then for a generic choice of the center the inverse image by the subdivision map 
$(\sd ^\sigma _n)\inv (V)\subset \Spec k[X_1,\dots ,X_m] \times \Delta ^n $ has fiber dimensions $\le t$ over $\Delta ^n$ (Corollary \ref{cor:pullback-equi-dimensional}).

\subsection{Affine-linear maps of vector spaces}\label{sec:affine-linear-maps}

Recall that an {\em affine-linear} map $V_1\to V_2$ between vector spaces (over some field) is the composite of a linear map and translation on the target.
An {\em affine-linear subset} of a vector space is the image of an affine-linear map.
It can be written as $\bm{v}+W$ with $\bm{v}$ an element and $W$ a (uniquely determined) linear subspace.
The {\em dimension} and { \em codimension} of an affine-linear subset is defined to be those of $W$.
(In our treatment, it might be more consistent to include the empty subset as an affine-linear subset, but this is of minor importance.)
Suppose an affine-linear map as above has image with dimension $\ge c$. Then the inverse image of any element has codimension $\ge c$ in the source, because it is either empty or an affine-linear subset having codimension equaling the dimension of the image.

\begin{lemma}\label{Lem:enough_codimension}
Let $W\subset $ 
$\bbA ^m\times \bbA ^n =\Spec (k[X_1,\dots ,X_m;Z_1,\dots ,Z_n])$ be an irreducible closed subscheme of dimension $n+t$ (where $t\ge 0$). Consider the following affine-linear map of vector spaces, where the source denotes the vector space of polynomials of degree $\le N$:
\begin{align*}
k[X_1,\dots ,X_m]_{\le N} &\to \Gamma (W,\mcal{O}_W) \\[5pt]
C(\undl{X})&\mapsto Z_1-C(\undl{X}).
 \end{align*}
Then the inverse image of $0\in \Gamma (W,\mcal{O}_W)$ has codimension $\ge N+1$ on the source.
\end{lemma}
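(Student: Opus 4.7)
Let $\pi \colon W \to \bbA ^m$ be the projection and set $Y := \overline{\pi (W)}$, an irreducible closed subvariety of $\bbA ^m$. By the generalities collected in \S \ref{sec:affine-linear-maps}, if the inverse image of $0$ is nonempty then its codimension equals the dimension of the image of the underlying linear map $C \mapsto C(\undl{X})|_W$. So it is enough to prove that either this inverse image is empty, or the image of $k[X_1,\dots ,X_m]_{\le N}$ in $\Gamma (W,\mcal{O}_W)$ has dimension $\ge N+1$. The argument then splits according to $\dim Y$.

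\textbf{Case 1: $\dim Y \ge 1$.} The function field $k(Y)$ has transcendence degree at least one over $k$; since it is generated by the images of $X_1,\dots ,X_m$, some $X_{i_0}$ must be transcendental over $k$ in $k(Y)$. The $N+1$ powers $1, X_{i_0}, X_{i_0}^2, \dots , X_{i_0}^N$ are then $k$-linearly independent in $k(Y)$, hence in the subring $k[Y] \subset \Gamma (W,\mcal{O}_W)$ (the inclusion coming from $Y = \overline{\pi (W)}$). This exhibits the desired $N+1$ linearly independent elements in the image.

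\textbf{Case 2: $\dim Y = 0$.} Then $Y = \Spec \kappa $ for a finite extension $\kappa /k$, and the induced ring map $k[X_1,\dots ,X_m] \to \Gamma (W,\mcal{O}_W)$ factors through $\kappa $; scheme-theoretically this forces $W \subset \Spec \kappa \times _k \bbA ^n_k = \bbA ^n_\kappa $. A dimension count
\[
  n+t = \dim _k W \le \dim _k \bbA ^n_\kappa = n
\]
gives $t=0$ and then $W = \bbA ^n_\kappa $ by irreducibility of top dimension. Hence $\Gamma (W,\mcal{O}_W) = \kappa [Z_1,\dots ,Z_n]$, in which $Z_1$ does not lie in $\kappa $ while every $C(\undl{X})$ does. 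So $Z_1 - C(\undl{X}) \ne 0$ for every $C$, i.e.\ the inverse image of $0$ is empty.

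The only step calling for a little care is verifying that some $X_{i_0}$ is genuinely \emph{transcendental}---not merely non-constant---over $k$ on $Y$ in Case 1, which is precisely why the positive-dimensional hypothesis on $Y$ is needed to run the power-independence argument. Beyond this I do not anticipate any serious obstacle.
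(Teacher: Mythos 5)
Your proof is correct and follows essentially the same two-case strategy as the paper: dispose of the zero-dimensional image by showing the preimage of $0$ is empty, and in the positive-dimensional case produce a degree-$1$ polynomial whose restriction to $W$ is transcendental over $k$, so that its first $N+1$ powers give linearly independent images. The only (harmless, and slightly tidier) variation is that you extract a transcendental coordinate function $X_{i_0}$ directly rather than a generic linear form $l(\undl{X})$ as in the paper, which lets you avoid invoking that $k$ is infinite at this step.
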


\begin{proof}
If the image of $W$ to $\bbA ^m$ is a (necessarily closed) point $x$, then for a dimension reason we see that the support of $W$ is $\bbA ^n_{ k(x) }$ and that the inverse image is empty.

Next, suppose that the image of $W\to \bbA ^m$ has dimension $\ge 1$.
By \S \ref{sec:affine-linear-maps}, it suffices to show that the image of the restriction map
$k[X_1,\dots ,X_m]_{\le N}\to \Gamma (W,\mcal{O}_W)$ has dimension $\ge N+1$.
Since the image of $W\to \bbA ^m$ now has dimension $\ge 1$ and since the base field is infinite, we can find a hyperplane in $\bbA ^m$ not containing the image of $W$.
It follows that there is a linear form $l(\undl{X})\in k[X_1,\dots ,X_m]_1$ mapping to an element $\theta $ in the ring $\Gamma (W,\mcal{O}_W)$ which is transcendental over $k$.
Then the elements $1,l(\undl{X}), \dots ,l(\undl{X})^N \in k[\undl{X}]_{\le N}$
map to
$1,\theta ,\dots ,\theta ^N \in \Gamma (W,\mcal{O}_W)$
which are linearly independent over $k$. 
This completes the proof.
\end{proof}
\subsection{Subdivision with respect to the universal center}\label{Sec:choice_of_centers}
When we later actuate the moving machinery, we will want to choose centers of subdivision $c^i(\undl{X})\in \Delta ^i(k[X_1,\dots ,X_m])$ whose coordinates are represented by polynomials of degree $\le N$  for some integer $N$.
As is always the case with moving lemmas, provided $N$ is large enough, any generic choice of $c^i$ will serve our purposes.
However, to discuss which closed subset our ``generic'' centers have to avoid (and how large $N$ has to be), we must consider the universal situations first.

Let $\bbA _{\le N}$ be the affine space parametrizing polynomials in variables $X_1,\dots ,X_m $ of degree $\le N$. It has dimension $\frac{(N+m)!}{N!m!}$.

Let $p\ge 0$ be an integer and $\Lambda \subset \{ 1,\dots ,p\} \times \{ 1,\dots ,n \} $ be a set of indices. For pairs $(i,j)$ not in $\Lambda $, suppose we are given constant polynomials 
\begin{equation*}
C^i_j(\undl{X}):= C^i_j\in k \quad\text{ for } (i,j)\not\in \Lambda .
\end{equation*}
Given these data,
we can consider the morphism
\begin{align*}
\Sd \colon ( \bbA _{\le N} )^\Lambda \times \bbA ^m\times \bbA ^p
&\to \bbA ^m\times \bbA ^n \\
(\{ C^i_j(\undl{X})\} _{(i,j)\in \Lambda },
\undl{X},\undl{Y})
&\mapsto (\undl{X},\left( \sum _{i=1}^p C^i_j(\undl{X})Y_i \right) _{1\le j\le n}) .
 \end{align*}

When a rational point $\{ C^i_j(\undl{X})\} _{(i,j)\in \Lambda } \in ( \bbA _{\le N} )^\Lambda (k)$ is given, write 
\[ 
\sd _{\{ C^i_j\} }:= \Sd (\{ C^i_j(\undl{X})\} ,-,-)
\colon \bbA ^m\times \bbA ^p\to \bbA ^m\times \bbA ^n .
\]
We saw in \eqref{Sec:subdivision}\eqref{Sec:homotopy}
that under appropriate choice of linear coordinates on simplices $\Delta ^\bullet $, the morphisms $\sd _n^\sigma $ and $\pr _1\circ \sd _{n,k}^\sigma $ are equal to
$\sd _{\{ C^i_j\} }$ for appropriate choices of $\Lambda $ and $ C^i_j(\undl{X})$.
(Recall that we now set $A=k[X_1,\dots ,X_m]$, which amounts to $\bbA ^m$.)

\subsection{Bad locus}

Now suppose we are given a closed subset $V\subset \bbA ^m\times \bbA ^n$ of dimension $\le n+t$.
We want to argue that $\sd _{ \{ C^i_j \}  } \inv (V) \subset \bbA ^m\times \bbA ^p $ has fibers of dimension $\le t$ over $\bbA ^p $ for a generic choice of $\{ C^i_j(\undl{X})\} $.
For given constants $C^i_j\in k$ (for each $(i,j)\not\in \Lambda $), define the ``bad locus'' on which this condition is not true:
\begin{equation}\label{Eq:bad_locus}
B (V) := \left\{ \alpha \in ( \bbA _{\le N} )^\Lambda \times \bbA ^p
\middle|
\begin{array}{c}
	\text{the fiber}\\
	\Sd \inv (V) _\alpha \subset \bbA ^m_{k(\alpha )} \\ 
	\text{has dimension $>t$} \end{array} \right\} 
\subset ( \bbA _{\le N} )^\Lambda \times \bbA ^p .
\end{equation}
It is a constructible subset (a consequence of e.g.\, \cite[IV${}_{1}$, 1.8.4 and IV${}_{3}$, 13.1.3]{EGA4}).
Note that if the first component of $\alpha $ is $\{ C^i_j(\undl{X})\} _{(i,j)\in \Lambda } \in ( \bbA _{\le N} )^\Lambda (k(\alpha ) )$, 
then $\Sd \inv (V) _\alpha = \sd _{ \{ C^i_j \}  } \inv (V\otimes k(\alpha ) ) $.

\subsection{A pointwise argument}\label{Sec:point-wise}
The following assertion is useful when estimating the dimension of a constructible subset (seen as a topological space by the subset topology).

Let $f\colon X\to Y$ be a map of finite-type $k$-schemes and $C\subset X$ be a constructible subset.
If the fiber $C_y$ has codimension $\ge c$ in $X_y$ for all $y\in Y$, then $C$ has codimension $\ge c$ in $X$.

This follows for example from the fact that the dimension of a constructible subset of a finite-type $k$-scheme is the maximum of the transcendental degrees of its points (which is true for locally open subsets hence for constructible subsets of such schemes).
When we apply it, the map $f$ is often a projection $X\times Z\to Z$.

\begin{proposition}\label{Prop:codimension}
Let $V\subset \bbA ^m\times \bbA ^n$ be a closed subset of dimension $\le n+t$ with some $t\ge 0$.
Suppose a $\Lambda \subset \{ 1,\dots ,p \}\times \{ 1,\dots ,n \}$ and $C_{j}^i \in k $ for $(i,j)\not\in \Lambda $ have been chosen and define the constructible subset $B (V) \subset ( \bbA _{\le N} )^\Lambda \times \bbA ^p$ by \eqref{Eq:bad_locus}.
Let $y\in \bbA ^p_k $ be a point and write $(y_1,\dots ,y_p)\in \bbA ^p(k(y))$ for its coordinates.
Assume that either of the following is true:
\begin{itemize}
\item[(i)] for all $1\le j\le n$, there exists $1\le i\le p$ such that $y_i\neq 0$ and $(i,j)\in \Lambda $; or

\item[(ii)] the projection $V\to \bbA ^n$ has fiber dimensions $\le t$.
\end{itemize}
Then the fiber $B (V) _y:= B (V) \times _{\bbA ^p} \{ y \}$ has codimension $\ge N+1$ in $(\bbA _{\le N} )^\Lambda _{k(y)}$.
\end{proposition}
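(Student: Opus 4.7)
The plan is to proceed by induction on $n$ after first reducing to the case of irreducible $V$: indeed $B(V)_y = \bigcup_r B(V^{(r)})_y$ over the irreducible components $V^{(r)}$ of $V$, and a finite union preserves the codimension bound. Two subcases are trivial: if $\dim V \le t$ then $\sd\inv (V)$ already has dimension $\le t$; if the image of $V$ in $\bbA^m$ is a single point, then $\sd\inv (V)$ is contained in that point. In either case $B(V)_y = \emptyset$. Henceforth I may assume $V$ is irreducible of dimension $d \ge t+1$ with $X$-projection of dimension $\ge 1$; the base case $n=0$ of the induction is then vacuous.

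For the inductive step $n \ge 1$ I split according to whether hypothesis (i) holds. Assume first that (i) holds. Cut $V$ by the $n$-th equation, setting $V_n := V \cap \{Z_n = \sum_i C^i_n(\undl X) y_i\}$. If $V_n$ has dimension $\le d-1$, its image $V'_n$ in $\bbA^m \times \bbA^{n-1}$ has dimension $\le (n-1)+t$ and still satisfies (i) at level $n-1$; by the inductive hypothesis the bad locus of $V'_n$ has codimension $\ge N+1$ in the parameter space for $\{C^i_j\}_{j<n}$, and the pointwise argument of \S\ref{Sec:point-wise} applied to the projection onto the $\{C^i_n\}$-slot then propagates this to codimension $\ge N+1$ in the full $(\bbA_{\le N})^\Lambda$. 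If instead $V_n = V$, then $Z_n - \sum_i C^i_n(\undl X) y_i$ vanishes on $V$; choosing $i_0$ with $y_{i_0}\ne 0$ and $(i_0,n)\in \Lambda$ as provided by (i) at $j=n$, and viewing $C^{i_0}_n$ as the variable after absorbing the scalar $y_{i_0}$, Lemma \ref{Lem:enough_codimension} --- applicable since $V$'s projection to $\bbA^m$ has dimension $\ge 1$ --- bounds this locus by codimension $\ge N+1$ in the $\{C^i_n\}$-space, hence in $(\bbA_{\le N})^\Lambda$.

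Now suppose (ii) holds and (i) fails at some $j_0$. Then every $i$ with $(i,j_0)\in\Lambda$ has $y_i=0$, so $\sum_i C^i_{j_0}(\undl X) y_i$ collapses to a constant $c\in k$ independent of $\{C^i_{j_0}\}$. Let $V'_{j_0} := V \cap \{Z_{j_0}=c\}$, embedded into $\bbA^m \times \bbA^{n-1}$. Its fibers over $\bbA^{n-1}$ coincide with fibers of $V\to \bbA^n$ over points of the form $(w,c)$ and hence have dimension $\le t$; thus (ii) at level $n-1$ is inherited and $\dim V'_{j_0}\le (n-1)+t$. By the inductive hypothesis the bad locus of $V'_{j_0}$ has codimension $\ge N+1$ in the parameter space at level $n-1$, and since the $j_0$-th equation of $\sd$ is automatic on $V'_{j_0}$, this bound transfers to $B(V)_y$ by trivial extension in the $\{C^i_{j_0}\}$-slot.

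The main obstacle will be to keep the case analysis tidy, in particular to verify that the reduced scheme $V'_n$ (resp.\ $V'_{j_0}$) inherits hypothesis (i) or (ii) at level $n-1$: for (i) this is purely combinatorial, while for (ii) it follows from the identification of fibers upon cutting by a fixed hyperplane. A secondary subtlety is the edge case of Lemma \ref{Lem:enough_codimension} when $V$'s $X$-projection is too small, which is precisely what the initial reductions are designed to rule out.
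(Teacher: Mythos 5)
Your proof is correct in essence and uses the same core technical ingredients as the paper (Lemma~\ref{Lem:enough_codimension}, the pointwise argument of \S\ref{Sec:point-wise}, and the fibered--dimension count from hypothesis~(ii) to handle the degenerate equations), but it organizes them differently: you run an induction on $n$ that cuts hyperplanes one at a time, whereas the paper first intersects with all the constant hyperplanes at once (the set $J$) to reduce~(ii) to~(i), and then stratifies $B(V)_y$ directly by the first index $j_0$ at which a hyperplane fails to cut properly, producing the decomposition $B(V)_y\subset\bigsqcup_{j_0}B(V)_y^{j_0}$ and applying the pointwise argument in a single sweep. Unrolled, your recursion is essentially a reversed-order version of that stratification.

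A few small points are worth making explicit, since you flagged them as ``obstacles'' but did not fully discharge them. First, the initial reductions (irreducibility; dismissal of the case $\dim V\le t$ and of the case where the $\bbA^m$-projection is a point) must be re-performed inside the induction, not only at the start: after a cut, $V_n'$ may be reducible or may project to a point, so these should be absorbed into the inductive hypothesis rather than treated as one-time preprocessing. Second, your invocation of Lemma~\ref{Lem:enough_codimension} is not a literal citation: the lemma is stated for $W$ of exact dimension $n+t$, but after cuts you apply it to varieties of smaller dimension. This is harmless because the case of the lemma you actually use --- the $\bbA^m$-projection has dimension $\ge 1$, so the image of $k[\undl X]_{\le N}\to\Gamma(W,\mcal O_W)$ has dimension $\ge N+1$ --- makes no use of $\dim W$; but you should say that you are using that part of the argument, not the statement as written. (The paper avoids this by implicitly re-identifying $V_{j_0-1}$ with a subscheme of $\bbA^m\times\bbA^{n-j_0+1}$, restoring the exact dimension hypothesis.) Finally, when you apply the pointwise argument over the $\{C^i_n\}$-slot, the relevant $V_n'$ is defined over the residue field $k(\beta)\supset k(y)$, so the inductive hypothesis must be applied over that larger field; this is fine since everything is stable under extension of the (infinite) base field, but should be noted.

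With these clarifications, the argument goes through and is a legitimate alternative packaging of the paper's proof.
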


\begin{proof}
Base-change to $k(y)$ does not affect Assumptions (i)(ii) and the conclusion. Thus, making this base-change, we may assume $y$ is a $k$-rational point.

For a point $\alpha =(\{ C^i_j(\undl{X})\} _{i,j}, (y_1,\dots ,y_p))\in ( \bbA _{\le N} )^\Lambda \times \bbA ^p$,
the fiber $\Sd \inv (V)_{\alpha }$ can be obtained by intersecting 
\begin{equation*}
V_{k(\alpha )}\quad\text{ in }\quad \bbA ^m_{k(\alpha )}\times _{k(\alpha )}\bbA ^n_{k(\alpha )} 
=\Spec (k(\alpha )[X_1,\dots ,X_m; Z_1,\dots ,Z_n])
 \end{equation*}
with hypersurfaces
\begin{equation}\label{eq:hypersurfaces_H}
H_j:= \{ Z_j=\sum _{i=1}^p C^i_j(\undl{X})y_i \} 
 \end{equation}
($j=1,\dots ,n$) and projecting it to $\bbA ^m_{k(\alpha )}$ (the projection is then automatically a closed immersion).
Thus our goal is to bound 
$\dim \left( V_{k(\alpha )} \cap \bigcap _{j=1}^n H_j \right) $ 
except when $\{ C^i_j(\underline X) \} _{(i,j)\in \Lambda }$ is in some codimension $\ge N+1$ subset of $(\bbA ^{\le N})^\Lambda $.

Let $J $ be the set of those $j$'s
such that the set of indices 
\begin{equation}\label{Eq:Lambda}
\Lambda (y,j):=
\Lambda \cap (\{ 1\le i\le p \mid y_i\neq 0 \} \times \{ j\} ) 
 \end{equation}
is empty. 
For a given $j$, this is equivalent to the condition that the functions $C^i_j(\undl{X})y_i$ on $(\bbA _{\le N} ) ^\Lambda $ are constants $C^i_jy_i \in k$ for all $1\le i\le p$.
When this is the case, the hypersurface $H_j\subset \bbA ^m_{k(\alpha )}\times _{k(\alpha )}\bbA ^n_{k(\alpha )} $ from \eqref{eq:hypersurfaces_H} is the pullback of a hyperplane $H_{j,0}\subset \bbA ^n $.

Suppose $J$ is non-empty, i.e., Assumption (i) fails, so that Assumption (ii) holds.
Regardless of the choice of $C^i_j(\underline X)\in \bbA _{\le N} $ for
$(i,j)\in \Lambda \cap (\{ 1,\dots ,p\} \times J)$ (all defined over some overfield $k(y)\subset \Omega $), the following intersection of subsets from \eqref{eq:hypersurfaces_H}: 
\[ 
	H_J:= \bigcap _{j\in J} H_j %
\]
is the pull-back of the linear subscheme $H_{J,0}=\bigcap_{j\in J} H_{j,0}\subset \bbA ^n $ of codimension $|J|$. 
So we may compute the intersection $V \cap H_J$
in $\bbA ^m \times  \bbA ^n$ as 
\[
	V\cap H_J = V  \times _{\bbA ^n  } H_0 ,
\]
which has dimension $\le \dim (H_0 )+t $, because 
Assumption (ii) says $V $ has fiber dimensions $\le t$ over $\bbA ^n $.
Since the intersection we are interested in is $V_{k(\alpha )}\cap \bigcap _{j=1}^n H_j = \left( V_{k(\alpha )}\cap H_J \right) \cap \bigcap _{j\not\in J} H_j $,
we can make the following replacements
\begin{align*}
	\{ 1,\dots ,n\} &\leadsto \{ 1,\dots ,n\} \setminus J \\
	\bbA ^m \times \bbA ^n &\leadsto \bbA ^m \times H_{J,0} (\cong \bbA ^m \times \bbA ^{n-|J|} )\\
	V &\leadsto V  \cap H_J  \\ 
	\Lambda &\leadsto \Lambda \setminus (\{ 1,\dots ,p\} \times J )
\end{align*}
to reduce the problem to the case of Assumption (i).
(For this reduction, we used \S \ref{Sec:point-wise} in that in order to bound the codimension of $B(V)_y$ from below, it suffices to bound the codimension of each fiber of $B(V)_y \to (\bbA _{\le N})^{\Lambda \cap (\{ 1,\dots ,p\} \times J )} $ in $(\bbA _{\le N})^{\Lambda \setminus (\{ 1,\dots ,p\} \times J )}$.)

The rest of this proof is an adaptation of a step in the Friedlander--Lawson moving lemma \cite[proof of Prop.~2.2, especially p.107]{FL98}.

From now on we assume Assumption (i) holds.
Let us examine the condition that a tuple $\{ C^i_j(\undl{X})\} _{i,j} \in ( \bbA _{\le N} )^\Lambda (\Omega )$, with $\Omega $ an overfield of $k$, belongs to the bad locus $B(V)_y$. 
Below, we shall often omit the base-change to $\Omega $ from notation.
The condition $\{ C^i_j(\undl{X})\} _{i,j} \in B(V)_y$ implies (though not necessarily equivalent to) that there is $1\le j_0\le n$ such that the following two conditions hold:
\begin{itemize}
\item[(I-${j_0}$)]
the intersection 
$V_{j_0-1}:=
V \cap \bigcap _{j=1}^{j_0 -1} H_j
$ 
in 
$
\bbA ^m \times  \bbA ^n  
$
has dimension $ n+t-(j_0-1)$ (which is necessarily positive); and 
\item[(II-${j_0}$)]
the intersection 
$V_{j_0 -1}\cap H_{j_0} $
still has the same dimension, i.e., $H_{j_0}$ contains an irreducible component of $V_{j_0-1}$.
\end{itemize}
If $V_{j_0-1}=\bigcup _\omega V_{j_0-1}^\omega $ is the decomposition into reduced irreducible components,
Condition (II-${j_0}$) is equivalent to that the function $Z_{j_0}-\sum _{i=1}^p C^i_j(\underline{X})y_i $ restricts to the zero element on one of $V_{j_0-1}^\omega $.

Denote by $B(V)^{j_0}_y \subset (\bbA _{\le N})^\Lambda $ the constructible set of tuples satisfying (I-$j_0$)(II-$j_0$). We have 
\begin{equation*}
B(V)_y\subset \bigsqcup _{j_0=1}^n B(V)^{j_0}_y. \end{equation*}
Hence it suffices to prove that $B(V)^{j_0}_y$ has codimension $\ge N+1$ in $( \bbA _{\le N} )^\Lambda $. 
Note that the conditions (I-$j_0$)(II-$j_0$) do not depend on the choice of $C^i_j(\undl{X})$ for indices $j> j_0$. 
By \S\ref{Sec:point-wise}, it suffices to show that for each choice of $C^i_j(\undl{X})$ ($j<j_0$) satisfying (I-$j_0$), the set of tuples $\{ C^i_{j_0}(\undl{X})\} _{(i,j_0)\in \Lambda (y,j_0)} $ satisfying (II-$j_0$) form a constructible subset of
$( \bbA _{\le N} )^{ \Lambda (y,j_0) }$
having codimension $\ge N+1$,
where the set of indices $\Lambda (y,j_0)$ is as defined in \eqref{Eq:Lambda}.

By Assumption (i), the set $\Lambda (y,j_0)$ is non-empty.
In particular, the affine-linear map
\begin{align*}
( k[X_1,\dots ,X_m]_{\le N} )^{\Lambda (y,j_0)}
&\to k[X_1,\dots ,X_m]_{\le N} \\[5pt]
\{ C^i_{j_0}(\undl{X})\} _{(i,j_0)\in \Lambda (y,j_0)} & \mapsto \sum _{i=1}^p C^i_{j_0}(\undl{X})y_i
 \end{align*}
is surjective.
(It is not necessarily a linear map because $C^i_{j_0}$ with $(i,j_0)\not\in \Lambda $ are prescribed constants, so that the zero element may be mapped to a non-zero constant.)
By this surjectivity and Lemma \ref{Lem:enough_codimension}, the inverse image of zero along the affine-linear map
\begin{align*}
( k[X_1,\dots ,X_m]_{\le N} )^{\Lambda (y,j_0)}& \to \Gamma (V_{j_0-1}^\omega ,\mcal{O}_{V_{j_0-1}^\omega }) \\[2pt]
\{ C^i_{j_0}(\undl{X})\} _{(i,j_0)\in \Lambda (y,j_0)} & \mapsto
Z_{j_0}-\sum _{i=1}^p C^i_{j_0}(\undl{X})y_i
 \end{align*}
has codimension $\ge N+1$ in the source for all $\omega $.
This verifies that $B(V)^{j_0}_y$ has codimension $\ge N+1$ in $( \bbA _{\le N} )^{\Lambda (y,j_0)}$ and completes the proof of Proposition \ref{Prop:codimension}.
\end{proof}

\begin{corollary}\label{cor:pullback-equi-dimensional}

Let $V\subset \bbA ^m\times \Delta ^n $ be a closed subset of pure dimension $n+t$ for some $t\ge 0$, satisfying the face condition with respect to the faces of $\Delta ^n$.
Let $N\ge n+1$.
Then for a generic choice of centers $c^i\in \Hom _{k}(\bbA ^m,\Delta ^i ) $ of degrees $\le N$, $i=1,\dots ,n$, the subdivision maps in \eqref{Sec:subdivision}\eqref{Sec:homotopy} satisfy the following.
\begin{itemize}
\item[(i)]
For all $\sigma \in \mathfrak{S}_{[n]}$, the closed subset $(\sd _n^\sigma )\inv (V)\subset \bbA ^m\times \Delta ^n$ is equi-dimensional over $\Delta ^n$ (of fiber dimension $t$).
\item[(ii)] Suppose $V$ is equi-dimensional over $\Delta ^n$. Then for all $1\le k\le n$ and $\sigma \in \mathfrak{S}_{[k]}$, the closed subset $(\pr _1\circ \sd _{n,k}^\sigma )\inv (V)\subset \bbA ^m\times \Delta ^{n+1}$ is equi-dimensional over $\Delta ^{n+1}$.

\end{itemize}
\end{corollary}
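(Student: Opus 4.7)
The plan is to reduce both clauses to Proposition \ref{Prop:codimension} combined with the pointwise principle of \S\ref{Sec:point-wise}. After fixing linear coordinates as in \eqref{Eq:written_as} or \eqref{Eq:homotopy_written_as}, each subdivision or homotopy map becomes a specialization of the universal family $\Sd$ of \S\ref{Sec:choice_of_centers} for a specific $\Lambda\subset\{1,\dots,p\}\times\{1,\dots,n\}$. The bad locus $B(V)\subset(\bbA_{\le N})^\Lambda\times\bbA^p$ is precisely the obstruction to equi-dimensionality, and the goal is to show that $B(V)$ has codimension $\ge N+1$ in its ambient space. Its projection onto $(\bbA_{\le N})^\Lambda$ is then a proper closed subset because $N+1-p\ge 1$ under the hypothesis $N\ge n+1$ (with $p\in\{n,n+1\}$). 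A finite union of such bad-center loci, over all relevant $\sigma$ and $k$, is still proper closed, and its complement defines the ``generic centers.''

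\emph{Part (ii)} is the more direct case. The hypothesis that $V$ is equi-dimensional over $\Delta^n$ is precisely Assumption (ii) of Proposition \ref{Prop:codimension}, and holds at every $y\in\bbA^{n+1}$; so the proposition combined with \S\ref{Sec:point-wise} yields the codimension $\ge N+1$ bound on $B(V)$ immediately. (Note that Assumption (i) could not have been used here, as the matrix in \eqref{Eq:homotopy_written_as} has no variable $C^i_j$ in columns $j>k$, so $(i,j)\in\Lambda$ fails for those $j$.) Projecting onto $(\bbA_{\le N})^\Lambda$ finishes the argument, for each pair $(k,\sigma)$.

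\emph{Part (i)} requires induction on $n$, the case $n=0$ being trivial. In the coordinates of \eqref{Eq:written_as} one has $\Lambda=\{(i,j):1\le j\le i\le n\}$, so Assumption (i) of Proposition \ref{Prop:codimension} at $y=(y_1,\dots,y_n)$ simplifies to $y_n\ne 0$, i.e.\ to the complement of the source face $\partial_n\Delta^{n-1}$. Over this open set the argument from Part (ii) applies verbatim. For the closed face $\{y_n=0\}$ I appeal to the face compatibility \eqref{eq:subdivision-compatibility},
\[
	\sd_n^\sigma\circ\partial_n=\partial_{\sigma(n)}\circ\sd_{n-1}^\tau,\qquad \tau=\partial_{\sigma(n)}^{-1}\circ\sigma,
\]
which, once the centers are generic enough that $\sd_n^\sigma$ is an isomorphism, identifies
\[
	(\sd_n^\sigma)^{-1}(V)\cap\bigl(\bbA^m\times\partial_n\Delta^{n-1}\bigr)=(\sd_{n-1}^\tau)^{-1}\bigl(\partial_{\sigma(n)}^{-1}V\bigr).
\]
By the face condition, $\partial_{\sigma(n)}^{-1}V$ is either empty or of pure dimension $(n-1)+t$ and again satisfies the face condition in $\Delta^{n-1}$. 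The inductive hypothesis at level $n-1$, applied to every face-pullback $\partial_j^{-1}V$ ($j\in[n]$) and every $\tau\in\mathfrak{S}_{[n-1]}$ that arises (finitely many conditions, with the degree bound $N\ge n$ inherited from $N\ge n+1$), then provides a dense open condition on $(c^1,\dots,c^{n-1})$ making the corresponding lower-dimensional subdivisions equi-dimensional over $\Delta^{n-1}$. Intersecting this inductive condition with the open-locus conditions coming from each $\sigma\in\mathfrak{S}_{[n]}$ yields the required generic condition on $(c^1,\dots,c^n)$.

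The main obstacle I anticipate is the bookkeeping in Part (i): verifying that the finitely many generic conditions (indexed by $\sigma$, by $j$, by $\tau$, and by the face-pullback of $V$) are genuinely compatible on a single parameter space of centers, and that the face condition on $V$ propagates correctly to each face-pullback so that the inductive hypothesis really is applicable. Once the setup is arranged, the codimension counts themselves are a transparent consequence of Proposition \ref{Prop:codimension}.
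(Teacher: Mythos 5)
Your argument is correct, and it rests on the same two ingredients the paper uses: Proposition~\ref{Prop:codimension} combined with the pointwise codimension principle of \S\ref{Sec:point-wise}, followed by projection of the bad locus into the parameter space $\mathcal{A}$ of centers. For part (ii) your treatment coincides with the paper's. For part (i), where you proceed by induction on $n$, the paper instead handles an arbitrary point $y\in\Delta^n$ in a single step: set $n_0$ to be the largest index with $y_{n_0}\neq 0$; since the matrix in \eqref{Eq:written_as} is upper triangular, the restriction of $\sd_n^\sigma$ to the face $\{Y_{n_0+1}=\cdots=Y_n=0\}$ lands in the face $\{Z_{n_0+1}=\cdots=Z_n=0\}$ and is itself an $n_0\times n_0$ upper-triangular subdivision map, while the face condition bounds $\dim\bigl(V\times_{\Delta^n}\Delta^{n_0}\bigr)$; then Proposition~\ref{Prop:codimension}(i) applies with $i=n_0$. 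Your inductive peeling of one face at a time via $\sd_n^\sigma\circ\partial_n=\partial_{\sigma(n)}\circ\sd_{n-1}^\tau$ reaches the same face $\Delta^{n_0}$ after $n-n_0$ steps, so the two arguments are equivalent in substance; the paper's one-step reduction simply avoids the extra bookkeeping of accumulating generic loci across all induction levels. One small correction: the identity
\[
(\sd_n^\sigma)^{-1}(V)\cap\bigl(\bbA^m\times\partial_n\Delta^{n-1}\bigr)=(\sd_{n-1}^\tau)^{-1}\bigl(\partial_{\sigma(n)}^{-1}V\bigr)
\]
follows directly from the commutation relation and the fact that $\partial_{\sigma(n)}$ is a closed immersion, with no genericity hypothesis needed; your parenthetical ``once the centers are generic enough that $\sd_n^\sigma$ is an isomorphism'' is superfluous and should be dropped.
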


\begin{proof}
It suffices to treat individual $\sigma $ (and $k$) because the conjunction of finitely many conditions which are true for a generic choice is again true for a generic choice.
Denote by $\mcal{A}$ the (finite-dimensional) affine space parametrizing the choice of centers $ c^i\in \Hom _{k}(\bbA ^m,\Delta ^i ) $ of degrees $\le N$ ($1\le i\le n$).
It suffices to show that the bad locus
$B \subset \mcal{A}\times \Delta ^p $ defined as in \eqref{Eq:bad_locus} 
(where $p=n$ or $n+1$) 
has codimension $\ge N+1$, because then its image into $\mcal{A}$ will have codimension $\ge N+1-p \ge 1$.

This being said, (ii) is obviously a particular case of Proposition \ref{Prop:codimension}(ii).

Let us consider (i).
Let $y\in \Delta ^n$ be any point. By the pointwise argument \ref{Sec:point-wise}, it suffices to show $B(V)_y \subset \mcal{A}_y $ has codimension $\ge N$.
Take a coordinate systems on the source and target as in \eqref{Eq:written_as}.
Let $y=(y_1,\dots ,y_n)\in \bbA ^n(k(y))$ (on the source) be the coordinate of $y$ with respect to this coordinate system and let $n_0$ be the greatest index $i$ with $y_i\neq 0$.
We may regard $y$ as the point $(y_1,\dots ,y_{n_0})$ of $\bbA ^{n_0}$ and
our map may be regarded as an endomorphism on
$\bbA ^m\times \bbA ^{n_0} $ given by
\begin{equation*}
( \undl{X}, \vvector{Y_1 \\ \vdots \\ Y_{n_0} }
)
\quad\mapsto\quad
(\undl{X}, \vvector{Z_1 \\ \vdots \\ Z_{n_0} }:=
\begin{pmatrix}
C^1_1(\undl{X}) & \cdots & C^{n_0}_1(\undl{X}) \\
&\ddots & \vdots  \\
&& C^{n_0}_{n_0}(\undl{X})
\end{pmatrix}
\vvector{Y_1 \\ \vdots \\ Y_{n_0} } )
 \end{equation*}
Since this $\bbA ^{n_0}$ corresponds to a face $\Delta ^{n_0}\inj \Delta ^n$, we may replace the problem with that for $V\times _{\Delta ^n}\Delta ^{n_0}$ which falls into Case (i) of Proposition \ref{Prop:codimension} (with $i=n_0 $ fulfilling the assumption for all $1\le j\le n_0$).
\end{proof}

In application,
this Corollary \ref{cor:pullback-equi-dimensional} is supplemented by the following assertion:

\begin{lemma}\label{lem:homotopy-well-defined}
Let $V\subset \bbA ^m\times \Delta ^n$ be a closed subset of pure dimension $ n+t$ for some $t\ge 0$ satisfying the face condition.
Let $N\ge 0$ be any integer.
Then for a generic choice of centers $c^i\in \Hom _k(\bbA ^m,\Delta ^i)$ of degrees $\le N$, $1\le i\le n$, the closed subset
\begin{equation*}
( \sd _{n,k}^\sigma )\inv (V)\subset \bbA ^m\times \Delta ^{n+1}
 \end{equation*}
has dimension $n+t+1$ and satisfies the face condition.
\end{lemma}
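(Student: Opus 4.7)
\emph{Plan.} The lemma has two parts to verify: (a) the equality $\dim (\sd _{n,k}^\sigma )^{-1}(V)=n+t+1$, and (b) the face condition on $(\sd _{n,k}^\sigma )^{-1}(V)$. The dimension assertion (a) is fairly direct. From the matrix description \eqref{Eq:homotopy_written_as}, the $n\times n$ minor of $\pr _1\circ \sd _{n,k}^\sigma $ obtained by deleting the $(k+1)$-st column is upper triangular with diagonal $C^1_1(\undl X),\ldots ,C^k_k(\undl X),1,\ldots ,1$. For a generic choice of centers, the polynomial $\prod _{i=1}^{k}C^i_i(\undl X)$ is nonzero, so $\pr _1\circ \sd _{n,k}^\sigma \colon \bbA ^m\times \Delta ^{n+1}\to \bbA ^m\times \Delta ^n$ is dominant with generic fibers of pure dimension $1$. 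Hence $\dim (\sd _{n,k}^\sigma )^{-1}(V) = \dim V+1 = n+t+1$.

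For the face condition (b), let $\iota \colon [n']\hookrightarrow [n+1]$ be any injection, corresponding to a face $\Delta ^{n'}\hookrightarrow \Delta ^{n+1}$. I want to bound $\dim (\sd _{n,k}^\sigma )^{-1}(V)\cap (\bbA ^m\times \Delta ^{n'})$ by $n'+t$. The composition
\[
f := \pr _1\circ \sd _{n,k}^\sigma \circ (\id _{\bbA ^m}\times \iota ) \colon \bbA ^m\times \Delta ^{n'}\to \bbA ^m\times \Delta ^n
\]
is again affine-$A$-linear, with image of the form $\bbA ^m\times L$ where $L\subset \Delta ^n$ is an affine-linear subset of some dimension $d\le n'$. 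Since the fibers of $f$ over $\bbA ^m\times L$ have dimension $n'-d$, it suffices to show $\dim V\cap (\bbA ^m\times L)\le m+d+t$ for a generic choice of centers.

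Here I would split into two subcases, determined by inspecting the compatibility relations \eqref{Eq:patience} for the given face $\iota$. In the first case, $L$ is contained in some proper face $\Delta ^{n''}\subset \Delta ^n$ (this happens, for instance, on $\iota $ that matches one of the right-hand sides in \eqref{Eq:patience} factoring through some $(\partial _j\times \id )\circ \sd ^{\sigma '}_{n-1,k'}$, or for the constant cases $(i,k)=(0,0)$ and $(n+1,n)$). In this case, the face condition on $V$ gives $\dim V\cap (\bbA ^m\times \Delta ^{n''})\le m+n''+t$, and a Proposition \ref{Prop:codimension}-style transversality applied to the affine-linear inclusion $L\hookrightarrow \Delta ^{n''}$ further cuts the dimension down by $n''-d$, yielding the desired bound. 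In the second case, $L$ is not contained in any proper face, and Proposition \ref{Prop:codimension} applied directly to the inclusion $L\hookrightarrow \Delta ^n$ gives $\dim V\cap (\bbA ^m\times L)\le m+d+t$ generically.

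\emph{Main obstacle.} The bulk of the work is the case analysis: for each $\iota$ one must use \eqref{Eq:patience} to identify which subcase applies and, in subcase (1), pin down the specific face $\Delta ^{n''}$ hosting $L$. This is essentially combinatorial bookkeeping. A secondary point is that each individual transversality argument produces a ``bad locus'' in the parameter space of centers of codimension $\ge 1$ (in the spirit of \eqref{Eq:bad_locus}); since there are only finitely many faces $\iota$, the union of these bad loci is still a proper closed subset, so a generic choice of centers meets the requirements for all $\iota$ simultaneously.
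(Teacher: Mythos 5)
The dimension assertion (a) is essentially fine once the handwaving is fleshed out: the map fails to be smooth over the $\bbA^m$-hypersurfaces $\{C^i_i(\undl X) = 0\}$, but a stratification by the rank of the matrix, together with genericity of the divisors $\{C^i_i = 0\}$ with respect to $V$, gives the bound $\dim(\sd_{n,k}^\sigma)^{-1}(V) \le n+t+1$; and the reverse inequality is automatic for a preimage along a dominant map of relative dimension~$1$. The lower bound $=n+t+1$ also requires that $(\sd_{n,k}^\sigma)^{-1}(V)$ be nonempty, which follows from surjectivity of the map for generic centers.

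There is, however, a genuine gap in part (b). You claim that the restriction $f := \pr_1\circ\sd_{n,k}^\sigma\circ(\id_{\bbA^m}\times\iota)$ has image of the form $\bbA^m\times L$ with $L\subset\Delta^n$ a fixed affine-linear subset. This is false whenever the face $\iota$ retains at least one of the vertices $v_1,\dots,v_k$: in the matrix description \eqref{Eq:homotopy_written_as} the corresponding columns have entries $C^i_j(\undl X)$ depending genuinely on $\undl X$, so the fiber of $\operatorname{im}(f)$ over a point $\undl x\in\bbA^m$ is an affine-linear subspace $L(\undl x)\subset\Delta^n_{k(\undl x)}$ that varies with $\undl x$ (and whose dimension can even jump on the hypersurfaces $\{C^i_i=0\}$). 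Consequently the proposed factorization through a product $\bbA^m\times L$ and the ensuing reduction to ``$\dim V\cap(\bbA^m\times L)\le m+d+t$'' does not exist. Also, the invocation of ``a Proposition \ref{Prop:codimension}-style transversality'' is not precise: Proposition \ref{Prop:codimension} controls fiber dimensions of \emph{pullbacks by the subdivision maps} over $\Delta^p$, not intersections of a fixed $V$ with an abstractly specified affine-linear subspace of $\Delta^n$; what statement you would be invoking is unclear. The paper sidesteps both issues by instead working with the \emph{universal} map $\Sd_{n,k}^\sigma$ over the parameter space $(\bbA_{\le N})^\Lambda$ of centers, writing out the concrete block form of its restriction to an arbitrary face $\Delta^{p_0}$ of the source, and proving a single codimension bound ``\eqref{Eq:next_assertion}'' by induction on $p_0$: the vanishing rows of the matrix are absorbed using the face condition on $V$, smoothness of the universal map on $\{Y_{k_0}\ne 0\}$ handles that open stratum, and the closed stratum $\{Y_{k_0}=0\}$ is a lower-dimensional face handled by the induction hypothesis. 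I'd suggest reorganizing your part (b) along those lines rather than trying to fix the product-image claim.
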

\begin{proof}
The following proof is routine but tedious to write down, reflecting the fact that the map $\sd _{n,k}^\sigma $ restricted to the faces can have complicated form.
As in the proof of Corollary \ref{cor:pullback-equi-dimensional}, it suffices to treat individual $k$ and $\sigma $.
Let $\mcal{A}$ be the finite-dimensional affine space parametrizing the choice of centers.
The choice of linear coordinates on $\Delta ^{n}$
gives an isomorphism $\mcal{A}\cong $
$( \bbA _{\le N} )^\Lambda $
with $\Lambda =\{ (i,j)\mid 1\le i\le j\le n \} $.

As in \eqref{Sec:choice_of_centers},
the map $\sd _{n,k}^\sigma $ is the restriction to the point $\{ c^i \} _{1\le i\le n} \in \mcal{A}$ of a map:
\begin{equation*}
\Sd _{n,k}^\sigma \colon \mcal{A}\times \bbA ^m\times \Delta ^{n+1}
\to \bbA ^m\times \Delta ^{n}.
 \end{equation*}
We know that $\Sd _{n,k}^\sigma $ can be written in the form \eqref{Eq:homotopy_written_as}.
Faces of $\Delta ^{n+1}$ and $\Delta ^n$
correspond to equations $\{ Y_i=0\} $ and $\{ Y_1+\dots +Y_{n+1}=1\} $,
and 
$\{ Z_j=0\} $ and $\{ Z_1+\dots +Z_{n}=1\} $
respectively.
From this, restriction of $\Sd _{n,k}^\sigma $ to a face $\mcal{A}\times \bbA ^m\times \Delta ^{p_0}$ of the source has the following form.
\begin{align*}
F\colon ( \bbA _{\le N} )^\Lambda \times \bbA ^m\times \Delta ^{p_0}
&\to
\bbA ^m\times \Delta ^n \\
( 
\{ C^i_j(\undl{X}) \} _{i,j} , \undl{x},\undl{y}
)
&\mapsto 
( \undl{x},
\left( \begin{array}{cc|c}
\bar{C}^1_1(\undl{x}) \cdots & \bar{C}^{k_0}_1(\undl{x}) \\
\vdots &\vdots & * \\
\bar{C}^1_{\lambda (1)}(\undl{x})\ddots & & \\
& \bar{C}^{k_0}_{\lambda (k_0)}(\undl{x}) & \\ \hline
&&I 
\end{array}\right)
\vvector{y_1 \\ \vdots \\ y_{p_0} }
)
 \end{align*}
where:
\begin{itemize}
\item $1\le k_0\le k$ and $1\le \lambda (1)\le \dots \le \lambda (k_0)\le n $ are some integers;
\item $\bar{C}^\alpha _\beta (\undl{X})$ 
are certain (linearly independent) $k$-linear combinations of $C^i_j(\undl{X})$'s;
\item entries denoted by ``$*$'' are some functions on $( \bbA _{\le N} )^\Lambda $;
\item $I$ is a matrix each of whose rows and columns has at most one non-zero entry, and whose non-zero entries are all $1$.
\end{itemize}
Therefore it suffices to prove the following property, which we prove by induction on $p_0$:
\begin{equation}\label{Eq:next_assertion}
\tag{{\bf P}($p_0,n,V$)} 
\begin{minipage}{0.72\textwidth}
		Let $V\subset \bbA ^m\times \Delta ^n$ be a closed subset of dimension $n+t$ ($t\ge 0$) 
	satisfying the face condition.
	Then the inverse image of $V$ along any map $F $ from $( \bbA _{\le N} )^\Lambda \times \bbA ^m\times \Delta ^{p_0}$ as above
	has codimension $\ge m-t $ on the source.
\end{minipage}
\end{equation}
This condition implies that a generic fiber of the projection $F \inv (V)\to ( \bbA _{\le N} )^\Lambda $ has codimension $\ge m-t$ in $\bbA ^m\times \Delta ^{p_0}$ (base-changed to the residue field of the chosen point of $( \bbA _{\le N} )^\Lambda $).
In our original setting, this says that the subset
\begin{equation*}
( \sd _{n,k}^\sigma ) \inv (V) \times _{\Delta ^{n+1} }\Delta ^{p_0} \quad\subset\quad \bbA ^m\times \Delta ^{p_0} 
 \end{equation*}
has codimension $\ge m-t$ for a general $\{ c^i\} \in \mcal{A}$.
This is precisely the face condition for this face. 

Moving on to the proof of Property \ref{Eq:next_assertion},
let $J$ be the set of $j\in \{ 1,\dots ,n \} $ such that the $j\upperth$ row of the above matrix is zero.
Set $n_0:= n-|J|$.
Then the map $F$ lands in the face $\bigcap _{j\in J} \{ Z_j=0 \} \cong \bbA ^m\times \Delta ^{n_0}$.
By the face condition on $V$, the closed set $V\cap \bigcap _{j\in J} \{ Z_j=0 \} $ has dimension $n_0+t $ (or is empty).
Replacing $\Delta ^{n}$ with this face $\Delta ^{n_0}$ and $V$ with $V\cap \bigcap _{j\in J} \{ Z_j=0 \} $, we may assume that the matrix $I$ has no trivial row.
Under this assumption,
the map $F $ is smooth on the locus $\{ Y_{k_0}\neq 0\} $. Therefore, the set $F\inv (V) \cap \{ Y_{k_0}\neq 0 \} $ has codimension $\ge m-t$.
On the other hand, the inverse image
$( F|_{ \{ Y_{k_0}=0\} } )\inv (V)$ has codimension $\ge m-t$ in the locus $\{ Y_{k_0}=0\} $ (so {\em a fortiori} in the entirety of the source) by induction on $p_0$.
This concludes that $F\inv (V)$ has codimension $\ge m-t$. 
\end{proof}
\section{Proof of Suslin's quasi-isomorphism}\label{sec:proof-Suslin-quis}

We have made all the necessary geometric preparation.
Now it is time to complete our alternative proof of  Theorem \ref{thm:Suslin-quis}.
By the pullback/pushforward functoriality of the cycle complexes with respect to field extensions of finite degree, we may assume that the base field $k$ is an {\it infinite} field.
Let $X$ be an affine $k$-scheme of finite type.

\subsection{The complex obtained from the faces of a fixed $[n]$}
\label{sec:faces-of-fixed-n}

To assemble the subdivision maps into a map of cycle complexes, 
we borrow a construction from simpicial techniques.

For an integer $n\ge 0$ we consider the following complex $z_t(X, \bullet \inj [n])$: it is a chain complex concentrated in degrees $[0,n]$ such that in homological degree $0\le s\le n$, we have (recall that $[n]=\{ 0,\dots ,n \}$ has $n+1$ elements)
\begin{equation}
	z_t(X,s \inj [n])
	:= \bigoplus _{S\subset [n],\ |S|=s+1}
	z_t(X,s) .
\end{equation}
Let us denote by $z_t(X,S)$ the summand corresponding to $S$.
For each $S=\{ v_0,\dots ,v_s \} $ and
$0\le k\le s$, we give a differential map from 
$z_t(X,S)$ to $z_t(X,S\setminus \{ v_k \} )$ which is equal to 
$(-1)^k$ times the pullback along the inclusion $S\setminus \{ v_k \} \subset S $.
The following map obtained by taking the sum on each $z_t(X,s)$
\[
	z_t(X,s\inj [n])
	= \bigoplus _{S\subset [n],\ |S|=s+1}
	z_t(X,s) 
	\to 
	z_t(X,s) 
\]
defines a map of complexes $z_t(X,\bullet \inj [n]) \to z_t(X,\bullet )$, which we call the canonical map $\sfcan ^{[n]}$. %

By imposing the equi-dimensionality condition, we can define the subcomplex $z_t^\equi (X,\bullet \inj [n])$, which is carried into 
$z_t^\equi (X,\bullet )$ by $\sfcan ^{[n]}$.

\begin{lemma}\label{eq:general-fact-simplicial}
	The maps 
	\begin{align*} 
	& \sfcan ^{[n]}\colon z_t(X,\bullet \inj [n]) \to z_t(X,\bullet ) \\
	\text{and }\quad 
	&\sfcan ^{[n]}\colon z_t^\equi (X,\bullet \inj [n]) \to z_t^\equi (X,\bullet ) 
	\end{align*} 
	induce isomorphisms on homology groups in degrees $0\le s\le n-1$.
\end{lemma}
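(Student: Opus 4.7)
My approach is induction on $n$. The base case $n=0$ is vacuous since the asserted range is empty.

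For the inductive step, partition the indexing subsets $S\subset[n]$ of size $s+1$ according to whether they contain the maximum vertex $n$. Subsets not containing $n$ are exactly subsets of $[n-1]$ and form a subcomplex of $z_t(X,\bullet\inj[n])$ naturally isomorphic to $z_t(X,\bullet\inj[n-1])$, under which $\sfcan^{[n]}$ restricts to $\sfcan^{[n-1]}$. Letting $Q_\bullet$ denote the quotient complex, one obtains a short exact sequence
\begin{equation*}
0\to z_t(X,\bullet\inj[n-1])\to z_t(X,\bullet\inj[n])\to Q_\bullet\to 0,
\end{equation*}
where $Q_s=\bigoplus_{T\subset[n-1],\,|T|=s}z_t(X,s)$ (with $T$ indexing $S=T\cup\{n\}$), and its differential only involves the face-pullback removals of elements $v_k\in T$; removal of $n$ itself lands in the subcomplex and hence vanishes in $Q_\bullet$.

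The technical core is to show $H_s(Q_\bullet)=0$ for $0\le s\le n-1$ (and indeed in all non-trivial degrees $\le n$) by constructing an explicit contracting homotopy $h\colon Q_s\to Q_{s+1}$ built from the degeneracy pullbacks $s_j^*\colon z_t(X,s)\to z_t(X,s+1)$ (the ``constant-cycle'' pullbacks along the projection $X\times\Delta^{s+1}\to X\times\Delta^s$). For each $T\subsetneq[n-1]$ (which holds whenever $s\le n-1$), one adjoins a canonically chosen new vertex $v\in[n-1]\setminus T$ (say, the smallest such) and defines $h$ on the $T$-component by mapping via $(-1)^k s_k^*$ into the $(T\cup\{v\})$-component, where $k$ denotes the position of $v$ within $T\cup\{v\}$. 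The simplicial identities $d_k^*s_k^*=d_{k+1}^*s_k^*=\id$ (coming from the cosimplicial identities $\sigma^k\delta^k=\sigma^k\delta^{k+1}=\id$), together with careful sign-tracking, then yield $dh+hd=\id$ on each $Q_s$ in the relevant range.

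Feeding this acyclicity into the long exact sequence of homology, combined with the inductive hypothesis $H_s(z_t(X,\bullet\inj[n-1]))\cong H_s(z_t(X,\bullet))$ for $0\le s\le n-2$, gives the desired isomorphism for $z_t(X,\bullet\inj[n])$ in degrees $0\le s\le n-2$ directly; the top case $s=n-1$ uses the stronger acyclicity $H_n(Q_\bullet)=0$ from the same homotopy together with the factorization $\sfcan^{[n-1]}=\sfcan^{[n]}\circ(\text{inclusion})$ to close the argument via surjectivity. The proof applies verbatim to the equi-dimensional variant $z_t^{\equi}$, since the degeneracy pullbacks (being flat base changes along smooth maps $X\times\Delta^{s+1}\to X\times\Delta^s$) preserve equi-dimensionality of cycles.

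\textbf{Main obstacle:} The crux is the contracting-homotopy construction on $Q_\bullet$ and the verification of the sign identities. Because the extension vertex $v\in[n-1]\setminus T$ varies with $T$, the position $k$ (and hence the sign) is not uniform in $T$, making the verification of $dh+hd=\id$ intricate; one must also take care that, when a boundary term crosses the subcomplex/quotient boundary, no uncancelled contributions remain. A cleaner conceptual alternative would be to identify $z_t(X,\bullet\inj[n])$ with the non-degenerate part of the normalized Moore complex of the bisimplicial abelian group $\bbZ[\Delta^n]\otimes z_t(X,\bullet)$ and invoke Eilenberg--Zilber together with the contractibility of $N_\bullet(\Delta^n)$; this makes the origin of the range restriction $s\le n-1$ transparent as a truncation effect from the boundedness of $N_\bullet(\Delta^n)$ in degrees $[0,n]$.
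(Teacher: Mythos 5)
The paper's own proof is a one-line citation to Friedlander--Suslin \cite[Cor.~1.3]{FriedlanderSuslin}, so any direct argument is necessarily a different route from the paper's; what you propose is such a route, but it has a real gap. Write $d_k$ and $s_j$ for the face and degeneracy pullbacks on $z_t(X,\bullet)$. Your claimed contracting homotopy on $Q_\bullet$ does not satisfy $dh+hd=\id$, and the failure is structural rather than a matter of sign-tracking. Take $n\ge 2$ and an element $V$ in the $T=\{1\}$-summand of $Q_1$. Your $h$ sends it to $s_0 V$ in the $\{0,1\}$-summand; then $dh(V)$ contributes $d_0 s_0 V = V$ back to the $\{1\}$-summand and $-d_1 s_0 V = -V$ to the $\{0\}$-summand, while $hd(V)$ contributes $s_0 d_0 V$ to the $\{0\}$-summand. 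Hence
\[
 (dh+hd)(V) \;=\; V \ \text{(in the $\{1\}$-summand)} \;+\; (s_0 d_0 - 1)V\ \text{(in the $\{0\}$-summand)},
\]
and the second term is nonzero because $s_0 d_0\neq\id$. The culprit is the identity $d_1 s_0=\id$ on the face opposite the adjoined degenerate vertex: it produces a ``$-V$'' in $dh$ that $hd$ would need to cancel with $s_0 d_0 V$, and it cannot. Moreover these leftover terms do not all flow in one direction of any ordering on the summands (the $T=\{0\}$-summand sends leftovers to $\{1\}$ while $T=\{1\}$ sends leftovers to $\{0\}$), so $dh+hd$ is not ``identity plus nilpotent''; for $n=2$ one can check it is not even a surjective endomorphism of $Q_1$ over $\bbZ$. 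A second problem is your appeal to $H_n(Q_\bullet)=0$ to close the induction at $s=n-1$: that vanishing is false. Already for $n=1$ one has $Q_\bullet=[\,z_t(X,1)\xrightarrow{d_0}z_t(X,0)\,]$, so $H_1(Q_\bullet)=\ker d_0\neq 0$ in general, and besides, your $h$ is not even defined on $Q_n$ (where $T=[n-1]$ has no missing vertex). The truncation to $s\le n-1$ in the lemma is precisely the obstruction the top-degree step must negotiate, and it does not disappear. Your suggested conceptual alternative --- realizing $z_t(X,\bullet\inj[n])$ via $\bbZ[\Delta^n]\otimes z_t(X,\bullet)$ and invoking Eilenberg--Zilber together with contractibility of $\Delta^n$ --- is much closer to the argument behind the cited Friedlander--Suslin result and is what I would recommend if you want a self-contained proof.
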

\begin{proof}
	This is a general fact on simplicial abelian groups.
	See for example \cite[Cor.~1.3]{FriedlanderSuslin}.
	(Beware the difference of conventions here and there. The set $[n]$ has $n+1$ elements.)
\end{proof}

Let us write $z_t(X,\bullet ) / (\equi )$ and $z_t (X,\bullet \inj [n]) / (\equi )$ for the quotients by the subcomplexes of equi-dimensional cycles.
In order to show Theorem \ref{thm:Suslin-quis} saying that $z_t(X,\bullet )/ (\equi )$ is acyclic, 
by Lemma \ref{eq:general-fact-simplicial}
it suffices to show that 
the map 
\begin{equation}\label{eq:want-to-show-weakly-nullhomotopic} 
	\sfcan ^{[n]}\colon 
	z_t(X,\bullet \inj [n])/ (\equi )
	\to 
	z_t(X,\bullet )/ (\equi ) 
\end{equation}
induces the zero maps on homology.
This will follow if we show that the map \eqref{eq:want-to-show-weakly-nullhomotopic} 
is weakly nullhomotopic.
\footnote{
Recall that a map of chain complexes is said to be {\it weakly nullhomotopic} if it is nullhomotopic on every finitely generated subcomplex (finitely many non-trivial terms, each finitely generated) of the source.}
To this end, fix an arbitrary finitely generated subcomplex
\[  
	z^{\fg}_t(X,\bullet\inj [n]) \subset z_t(X,\bullet\inj [n])	
\]
and let $z^{\fg}_t(X,\bullet\inj [n])/(\equi )$ be its image to 
$z_t(X,\bullet\inj [n])/(\equi )$.

\subsection{The subdivision maps induce a map of complexes}

Keep $n\ge 0$ fixed.
Choose a finitely generated subcomplex $z^{\fg}_t(X,\bullet\inj [n])$. 
By enlarging it slightly, we may assume that its degree $s$ part admits the decomposition 
\[ 
	z^{\fg}_t(X,s\inj [n])
	= 
	\bigoplus _{S\subset [n], |S|=s+1} 
	z^{\fg}_t(X,S),
\]
where $z^{\fg}_t(X,S):= z^{\fg}_t(X,s\inj [n]) \cap z_t(X,S)$.

Choose a closed immersion $X\inj \bbA ^m$.
All  the results in \S \ref{sec:equi-dimensionalization} are valid also for closed subsets in $X\times \Delta ^s $ via the immersion into $\bbA ^m\times \Delta ^s$ because all the maps we constructed are $\bbA ^m$-maps.
Below when we apply results from \S \ref{sec:equi-dimensionalization}, we assume that an integer $N\ge n+1$ has been fixed and the centers 
$c^s\in \Delta ^s (\bbA ^m) (\surj \Delta ^s (X))$ ($0\le s\le n$)
of degree $\le N$
have been chosen appropriately generically,
which is possible because we are working on a fixed $z^{\fg}_t(X,\bullet\inj [n])$.

Lemma \ref{lem:homotopy-well-defined} implies that
for each subset $S\subset [n]$ of order $s+1$,
each $0\le k\le s$ and each permutation $\sigma \in \mathfrak S_{[k]}$,
we have well-defined pullback maps along $\pr _1\circ \sd ^\sigma _{s,k}$ ($0\le k\le s\le n$):
\[
	(\pr _1\circ \sd ^\sigma _{s,k})^*\colon
	z^{\fg}_t(X,S) \to z_t(X,s+1)  ,
\]
which carries the equi-dimensional subgroup into its counterpart by Corollary \ref{cor:pullback-equi-dimensional}(ii).
The relations \eqref{Eq:patience} imply that the sum 
$\sum _{\sigma ,k} (-1)^{\sigma }(-1)^k (\pr _1\circ \sd ^\sigma _{s,k})^*$
for various $s$ and $S$ defines a homotopy between 
$\sfcan ^{[n]}$ and the map $\sd ^{[n]}$ defined by $\sum _{\sigma\in\mathfrak S_{s}}
(-1)^{\sigma } 
(\sd ^\sigma _s )^*\colon z^{\fg }_t (X,S)/ (\equi ) \to z_t(X,s) /(\equi )$
for each $S\subset [n]$:
\[
	\sfcan ^{[n]}\simeq \sd ^{[n]}
	\colon \quad
	z^{\fg }(X,\bullet\inj [n]) /(\equi ) \rightrightarrows z_t(X,\bullet ) / (\equi ) 
	.
\]
Corollary \ref{cor:pullback-equi-dimensional}(i) 
says that the map $\sd ^{[n]}$ is actually the zero map (for a generic choice of centers).
Thus we have constructed a nullhomotopy for 
$\sfcan ^{[n]}$ restricted to $z^{\fg}_t(X,\bullet \inj [n])$.
As the finitely generated subcomplex $z^{\fg }_t(X,\bullet\inj [n])$ can be taken arbitrarily large, 
we conclude that 
$\sfcan ^{[n]}$ itself is weakly nullhomotopic and by Lemma \ref{eq:general-fact-simplicial}
the proof of Theorem \ref{thm:Suslin-quis} is now complete.

\section{The modulus variant}\label{sec:modulus}

We quickly describe what can be said about the {\it modulus} analog of Bloch's cycle complex.
Suppose a finite-type $k$-scheme $X$ is endowed with a Cartier divisor $D$.
Let $\overline{\Delta ^n}$ be the closure of $\Delta ^n$ in $\bbP ^{n+1}$ (recall by definition $\Delta ^n\subset \bbA ^{n+1}\subset \bbP ^{n+1}$), which is isomorphic to $\bbP ^n$.
Let $\overline{\Delta ^n}_\infty $ denote the hyperplane $(\overline{\Delta ^n}\setminus \Delta ^n)_{\reduced }$
in $\overline{\Delta ^n}$.

A closed subset $V\subset (X\setminus |D|)\times \Delta ^n$ is said to satisfy the 
{\it modulus condition}
if, writing $\overline V ^N$ for the normalization of (each irreducible component of) the closure $\overline V$ in $X\times \overline{\Delta ^n}$,
the following inequality of Cartier divisors on $\overline V ^N$ holds:
\[
	D |_{\overline V^N} \ge  \overline{\Delta ^n}_\infty |_{\overline V ^N} .
\]

The advantage of the subdivision construction is the following fact.

\begin{lemma}\label{lem:modulus-preserved}
	Assume $D$ is effective.
	If a closed subset $V\subset (X\setminus D)\times \Delta ^n$ satisfies the modulus condition,
	then so does $(\pr _1\circ \sd ^{\sigma} _{n,k})\inv \subset (X\setminus D)\times \Delta ^{n+1}$
	for every $0\le k\le n$ and $\sigma \in \mathfrak S_{[k]}$
	regardless of the choice of centers.
\end{lemma}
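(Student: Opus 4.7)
The plan is to verify the modulus inequality $D|_{\overline{W}^N}\ge \overline{\Delta^{n+1}}_\infty|_{\overline{W}^N}$ pointwise at each codimension-$1$ point $P$ of $\overline{W}^N$, where $W:=(\pr_1\circ \sd^\sigma_{n,k})^{-1}(V)$ and $\phi:=\pr_1\circ \sd^\sigma_{n,k}$. Let $R:=\mathcal{O}_{\overline{W}^N,P}$, a discrete valuation ring with fraction field $K$ and valuation $v$, and write $\rho:\Spec R\to X\times \overline{\Delta^{n+1}}$ for the induced map; the goal is the inequality $v(\rho^*D)\ge v(\rho^*\overline{\Delta^{n+1}}_\infty)$. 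The structural fact to exploit is that $\phi$ is affine-linear in the $\Delta$-coordinates, so its projective extension $\overline{\phi}:X\times \overline{\Delta^{n+1}}\dashrightarrow X\times \overline{\Delta^n}$ satisfies $\overline{\phi}^*U=T$ for the distinguished homogeneous coordinates $T$ on $\overline{\Delta^{n+1}}$ and $U$ on $\overline{\Delta^n}$: the barycentric condition $\sum_{j}a_{ji}=1$ on the coefficients of $\phi$ translates, after homogenization, precisely into this identity. Hence, wherever $\overline{\phi}$ is regular, one has the equality of Cartier divisors $\overline{\phi}^*\overline{\Delta^n}_\infty=\overline{\Delta^{n+1}}_\infty$.

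The next step is to produce a map $\psi:\Spec R\to \overline{V}^N$ by the valuative criterion. The composition $\overline\phi\circ\rho|_{\Spec K}$ is defined because the generic-point image of $\rho$ lies in $W$ away from the indeterminacy of $\overline\phi$, and it lands in $V\subset \overline V$. Properness of the morphism $X\times\overline{\Delta^n}\to X$ then extends it uniquely to a morphism $\Spec R\to X\times\overline{\Delta^n}$, which lies in $\overline V$ by closedness and factors through $\overline V^N$ by normality of $R$. Pulling back the modulus hypothesis on $V$ along $\psi$ yields $v(\psi^*D)\ge v(\psi^*\overline{\Delta^n}_\infty)$. Since $\overline\phi$ is an $X$-morphism, $\psi$ and $\rho$ have identical projections to $X$, so $\psi^*D=\rho^*D$.

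It then remains to identify $\psi^*\overline{\Delta^n}_\infty$ with $\rho^*\overline{\Delta^{n+1}}_\infty$ in $R$, and this I expect to be the main obstacle. Both are effective Cartier divisors on $\Spec R$ agreeing after restriction to $\Spec K$ (by the identity $\overline{\phi}^*U=T$), so uniqueness of the valuative extension should force equality on $\Spec R$. The delicate case is when $\rho(P)$ lies in the indeterminacy locus of $\overline\phi$, where the formula $\overline{\phi}^*U=T$ does not directly apply to the closed point; I would handle it by passing to a blow-up $\pi:\widetilde Y\to X\times \overline{\Delta^{n+1}}$ lifting $\overline\phi$ to a morphism $\widetilde\phi$, and verifying via the explicit matrix form \eqref{Eq:homotopy_written_as} of $\phi$ that both Cartier divisors pull back to the same divisor on $\widetilde Y$ (since they arise from the common function $T$). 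Combining everything yields $v(\rho^*D)\ge v(\rho^*\overline{\Delta^{n+1}}_\infty)$, completing the proof.
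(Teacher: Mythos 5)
Your general scheme---localizing at a codimension-one point of $\overline{W}^N$, producing $\psi:\Spec R\to\overline{V}^N$ by the valuative criterion, and exploiting that $\overline\phi$ is given by linear forms preserving the sum of barycentric coordinates---is the right one. The genuine gap is the claim that $\psi^*\overline{\Delta^n}_\infty$ equals $\rho^*\overline{\Delta^{n+1}}_\infty$ on $\Spec R$. The ``uniqueness of valuative extension'' argument cannot give this: both divisors restrict to $0$ on $\Spec K$ (the generic point lands in $W$, away from both hyperplanes at infinity), so agreement over $K$ imposes no relation at all on their valuations. The blowup you suggest would in fact reveal an inequality rather than the equality: resolving the rational map $\overline\phi$ by $\pi:\widetilde Y\to X\times\overline{\Delta^{n+1}}$ gives $\widetilde\phi^*\overline{\Delta^n}_\infty=\pi^*\overline{\Delta^{n+1}}_\infty-E$ with $E\ge 0$ supported over the indeterminacy locus, because a linear system sheds its fixed part upon resolution. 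Concretely, write $\rho(\eta)=[\tau_0:\dots:\tau_{n+1}]$ with $\tau_i\in R$ and $\min_i v(\tau_i)=0$; then $v(\rho^*\overline{\Delta^{n+1}}_\infty)=v(\sum_i\tau_i)$, while for $w_j:=\sum_i c_{ji}\tau_i$ one has $\sum_j w_j=\sum_i\tau_i$ (each column of $(c_{ji})$ sums to~$1$), yet all $w_j$ can lie in $\mathfrak m_R$ when $\rho(P)$ sits on the base locus, making $v(\psi^*\overline{\Delta^n}_\infty)=v(\sum_i\tau_i)-\min_j v(w_j)$ strictly smaller. Only $v(\psi^*\overline{\Delta^n}_\infty)\le v(\rho^*\overline{\Delta^{n+1}}_\infty)$ holds in general.

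That is, fortunately, exactly the inequality you need---once the modulus condition is read with its standard sign, $D|_{\overline{V}^N}\le \overline{\Delta^n}_\infty|_{\overline{V}^N}$; the $\ge$ written in \S\ref{sec:modulus} is a slip, and with $\ge$ the lemma is actually false: take $X=\bbA^1$, $D=\{0\}$, $n=k=1$, $\sigma=\mathrm{id}$, $V=\{xu_1=1\}\subset(\bbA^1\setminus\{0\})\times\Delta^1$, and check that $\overline{\Delta^2}_\infty$ restricted to the closure of the pullback acquires an extra component not lying over $D$. With the correct sign, the chain $v(\rho^*D)=v(\psi^*D)\le v(\psi^*\overline{\Delta^n}_\infty)\le v(\rho^*\overline{\Delta^{n+1}}_\infty)$ closes the proof, and the ``main obstacle'' you flagged is no obstacle: the possible loss of multiplicity over the indeterminacy locus goes in your favor.
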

\begin{proof}
	The proof is similar to \cite[Prop.~3.6 or Rem.~3.10]{KaiMiyazaki}. %
	Just use the fact that the morphisms 
	$\pr _1\circ \sd ^\sigma _{n,k} $
	are written by polynomials whose degrees in 
	the coordinates of $\Delta ^\bullet $ are $1$.
\end{proof}

\begin{remark}
	With Suslin's construction $ X\times \Delta ^{n+1} \to X\times \Delta ^n$,
	one can only say that if $V$ satisfies the modulus condition with respect to 
	the divisor $(n+1) D$ (a stronger condition), then the pullback satisfies the modulus condition with respect to $D$
	because Suslin's maps have degrees $n+1$ in the coordinates of $\Delta ^n $, $\Delta ^{n+1}$.
	This is why formerly \cite{KaiMiyazaki} we only managed to show the pro-quasi-isomorphism over the thickenings $mD$ ($m\ge 1$).
\end{remark}

Define the (simplicial) {\it cycle complexes with modulus}
\[
	z^{\equi }_t (X|D,\bullet )
	\subset
	z_t (X|D,\bullet )
	\quad
	(\subset z_t(X\setminus |D|,\bullet ) )
\]
by adding the modulus condition (and the equi-dimensionality condition).
The next theorem improves our former result \cite[Th.~1.2]{KaiMiyazaki} with Miyazaki
in that we do not need the limit over the thickenings of $D$ anymore.

\begin{theorem}\label{thm:Suslin-modulus}
Let $X$ be an affine $k$-scheme equipped with an effective Cartier divisor $D$. 
Then for any $t\ge 0$, the inclusion
\begin{equation*}
z^{\equi }_t (X|D,\bullet )
\inj 
z_t (X|D,\bullet )
 \end{equation*}
is a quasi-isomorphism.
\end{theorem}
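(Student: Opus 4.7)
The plan is to copy the structure of \S\ref{sec:proof-Suslin-quis} essentially verbatim, with the crucial new observation being that the modulus condition is preserved by every pullback that occurs. After reducing to the case of infinite $k$ by finite pullback/pushforward functoriality, fix $n\ge 0$ and define the auxiliary complex $z_t(X|D,\bullet\inj [n])$ by imposing the face condition and the modulus condition on each summand $z_t(X|D,S)$ indexed by $S\subset [n]$ of size $s+1$; similarly define its equi-dimensional subcomplex. Since the proof of Lemma \ref{eq:general-fact-simplicial} is a purely simplicial fact about the canonical map from ``faces of a fixed $[n]$'' to the total complex, it carries over to both the modulus and the equi-dimensional-with-modulus complexes. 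Hence it suffices to show that, for each $n$, the induced map
\[
\sfcan^{[n]}\colon z_t(X|D,\bullet\inj [n])/(\equi)\to z_t(X|D,\bullet)/(\equi)
\]
is weakly nullhomotopic.

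Now fix a finitely generated subcomplex $z^{\fg}_t(X|D,\bullet\inj [n])$ and enlarge it so that it splits along $S\subset [n]$. Choose a closed embedding $X\inj\bbA^m$, an integer $N\ge n+1$, and centers $c^s\in\Delta^s(\bbA^m)$ of degree $\le N$ ($0\le s\le n$) generically enough. Corollary \ref{cor:pullback-equi-dimensional} and Lemma \ref{lem:homotopy-well-defined} then guarantee that the pullbacks $(\sd^\sigma_s)^*$ and $(\pr_1\circ\sd^\sigma_{s,k})^*$ are defined on the fixed subcomplex, preserve the face condition, and have the equi-dimensionality properties needed. The simplicial identities \eqref{Eq:patience} then give, exactly as in \S\ref{sec:proof-Suslin-quis}, a chain homotopy $\sum_{\sigma,k}(-1)^\sigma(-1)^k(\pr_1\circ\sd^\sigma_{s,k})^*$ between $\sfcan^{[n]}$ and the subdivision map $\sd^{[n]}=\sum_{\sigma}(-1)^\sigma(\sd^\sigma_s)^*$, viewed as maps into $z_t(X|D,\bullet)/(\equi)$. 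Corollary \ref{cor:pullback-equi-dimensional}(i) makes the target of $\sd^{[n]}$ zero. As the finitely generated subcomplex was arbitrary, $\sfcan^{[n]}$ is weakly nullhomotopic.

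The only ingredient genuinely new compared with \S\ref{sec:proof-Suslin-quis} is that every pullback used above must preserve the modulus condition. For the homotopy pullbacks $(\pr_1\circ\sd^\sigma_{n,k})^*$ this is the content of Lemma \ref{lem:modulus-preserved}; the same proof, relying only on the fact that the defining polynomials are of degree $1$ in the $\Delta^\bullet$-coordinates, applies verbatim to the subdivision pullbacks $(\sd^\sigma_n)^*$. Thus the modulus condition is stable under all the operations of the proof, \emph{regardless} of the (generic) choice of centers, and the argument goes through without any inflation of $D$.

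The conceptual obstacle was already removed in \S\ref{sec:subdivision-of-simplices}: because our subdivision and homotopy maps are affine-linear in the simplicial coordinates, no thickening $mD$ is ever forced on the cycles. This is precisely where Suslin's original endomorphisms, of degree $n+1$ in the $\Delta^\bullet$-coordinates, fail and necessitated passing to the pro-system over $mD$ in \cite{KaiMiyazaki}; here, the degree-$1$ feature of the new construction makes the modulus proof an essentially formal transcription of the proof of Theorem \ref{thm:Suslin-quis}.
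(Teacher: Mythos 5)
Your proposal is correct and takes essentially the same approach as the paper, whose proof of Theorem~\ref{thm:Suslin-modulus} is literally the one sentence that the argument of \S\ref{sec:proof-Suslin-quis} works verbatim thanks to Lemma~\ref{lem:modulus-preserved}. You spell this out, including the small but necessary observation that the subdivision pullbacks $(\sd^\sigma_s)^*$ (not just the homotopy pullbacks) preserve the modulus condition by the same affine-linearity argument.
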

\begin{proof}
	Thanks to Lemma \ref{lem:modulus-preserved},
	the proof of Theorem \ref{thm:Suslin-quis} given in 
	\S \ref{sec:proof-Suslin-quis} works verbatim.
\end{proof}

\begin{remark}
	As was mentioned in a footnote in \S \ref{sec:introduction}, 
	Theorem \ref{thm:Suslin-modulus} for the cubically defined
	$z_t(X|D,\bullet )$
	(which is not known to coincide with the simplicial unlike Bloch's higher Chow)
	is not yet available because 
	the author has not been able to (and has no plan of further trying to) write down similar subdivision and {\it homotopy} maps of cubes using only linear polynomials.
\end{remark}

\begin{remark}
Theorem \ref{thm:Suslin-modulus} used the bound $\deg _{Y} (f^{n}_k) \le 1$ (notation from \S \ref{sec:introduction}).
The other bound $\deg _{X}(f^{(n)}_k  ) \le n+1$ (Corollary \ref{cor:pullback-equi-dimensional}) can be used to obtain the following (rather incomplete) result.
Let us explain the motivation behind its formulation first.

In light of the use of complexes 
$(C_*\bbZ ^{\mathcal Q,\bbA ^q })(X)= \bbZ ^{\mathcal Q,\bbA ^q }(X\times \Delta ^\bullet )$
of algebraic cycles 
$V\subset \bbA ^q\times X\times \Delta ^\bullet $
quasi-finite and dominant over $X\times \Delta ^\bullet $
in \cite[\S 12]{FriedlanderSuslin},
it might be worthwhile to consider its modulus analog.

Because in Voevodsky's theory, quasi-isomorphisms of sheaves can be detected at spectra of function fields $X=\Spec F $,
the only relevant case of the moving lemma (Theorem \ref{thm:Suslin-quis}) of Suslin 
here is for the cycle complex $z_0(\bbA ^q_F,\bullet )$ (so by base change, for $z_0(\bbA ^q ,\bullet )$ ).

Now, in more general motive theories, the $\bbA ^1$-invariance should be replaced by 
the $(\bbP ^q,\bbP ^{q-1})$-invariance, 
where the specific meaning of the compactification $\bbA ^q \inj (\bbP ^q,\bbP ^{q-1})$ differs from theory to theory:
it is either 
the log compactification,
the avatar for the cofiber of the Gysin map for $\bbP ^{q-1}\inj \bbP ^q$ on cohomology,
or the scheme $\bbP ^q$ with the divisor $\bbP ^{q-1}$ encoding the fact that they are only allowing tame ramification.
The correct way to see it in our setup should be as the pair $(\bbP ^q,-\bbP ^{q-1})$ where the divisor $-\bbP ^{q-1}$ has a {\it minus sign}
because it has been shown by Miyazaki \cite{MiyazakiCubeInvariance} that the (cubical) cycle complex with modulus $z^*(X|D,\bullet )$ 
is $(\bbP ^q,-\bbP ^{q-1})$-invariant.
Schemes equipped with not necessarily effective Cartier divisors have been studied also in 
\cite{BindaWithout} \cite{KahnMiyazakiTriple}.

Since it is probably not true in whatever modulus category that quasi-isomorphisms between
sheaves can be checked at function fields, 
it seems to the author that we should ask if the following inclusion is a quasi-isomorphism
for affine $X$ and effective $D$:
\begin{multline}\label{eq:we-should-ask}
	z^{\equi }_t( (X,D)\otimes (\bbP ^q , -\bbP^{q-1} ) ,\bullet ) 
	\subset 
	z_t( (X,D)\otimes (\bbP ^q , -\bbP^{q-1} ) ,\bullet ) 
	.
\end{multline}
Here, the tensor product of pairs %
denotes the scheme 
$X\times \bbP ^q$ equipped with the 
divisor 
$D\times \bbP ^q -X\times \bbP ^{q-1}$.
One might think that one could use the $(\bbP ^q,-\bbP ^{q-1})$-invariance on both sides,
but
if $t-q<0$, the equi-dimensional cycle complex $z_{t-q}^\equi (X|D,\bullet )$ does not make sense.
\footnote{
In fact, the point of the presence of $\bbA ^q$ in $C_*\bbZ ^{\mathcal Q,\bbA ^q}$ was to make sense of the non-existent ``$z_{-q}^\equi $''.}

Hoping to show that \eqref{eq:we-should-ask} is a quasi-isomophism,
we deploy the subdivision machinery from \S \ref{sec:equi-dimensionalization}.
In doing so, we first choose an embedding $X\inj \bbA ^{p}$
and embed
$X\times (\bbP ^{q}\setminus \bbP ^{q-1}) \inj \bbA ^{p}\times \bbA ^q =:\bbA ^m$.
As the cycle complexes with modulus in this case are by definition certain complexes of cycles on $(X\setminus D)\times \bbA ^q \times \Delta ^\bullet $, %
the arguments concerning the equi-dimensionality 
goes just fine.

The exact analog of Lemma \ref{lem:modulus-preserved} that the subdivision maps preserve the modulus condition
is not necessarily true because we are concerned with a negative Cartier divisor $-\bbP ^{q-1}$ and because we took it away from our sight when we set up the subdivision machinery.
However, by the fact (Corollary \ref{cor:pullback-equi-dimensional}) that the polynomials describing $\sd ^{\sigma }_{s,k}$
have bounded degrees $\le N$ 
in the coordinates of $\bbP ^q\setminus \bbP ^{q-1}\cong \bbA ^q$
(where we can take $N=n+1$ if we are interested in homology degrees $\le n$),
an argument analogous to \cite[Prop.~3.6]{KaiMiyazaki}
shows that the homotopy maps $(\pr _1\circ \sd ^\sigma _{n,k})^*$ carry the group
$z_t( (X,N D)\otimes (\bbP ^q , -\bbP^{q-1} ) ,S ) $,
which has coefficient $N$ in the modulus, into 
$z_t(  (X,D)\otimes (\bbP ^q , -\bbP^{q-1} ) ,s+1 )$ when $|S|-1=s\le n$.

This establishes isomorphisms of pro-homology groups in each degree
for the inclusion 
\begin{multline}
	\Bigl\{ 
		z_t^{\equi }(  (X,D')\otimes (\bbP ^q , -\bbP^{q-1} ) ,\bullet  
	\Bigr\} _{D\subset D' \text{ thickening} }
	\\
	\xrightarrow{} 
	\Bigl\{
		z_t (  (X,D')\otimes (\bbP ^q , -\bbP^{q-1} ) ,\bullet )
	\Bigr\} _{D\subset D' \text{ thickening} }
	.
\end{multline}
Desirably we should be able to show the quasi-isomorphism for any fixed $D$.
\end{remark}

\end{document}